\documentclass[11pt]{amsart}

\usepackage[T1]{fontenc}
\usepackage[utf8]{inputenc}
\usepackage{lmodern}

\usepackage{amsmath,amssymb,amsthm}
\usepackage{mathrsfs}

\usepackage[a4paper,margin=1.15in]{geometry}
\usepackage{microtype}
\usepackage{xcolor}

\usepackage{tikz}
\usepackage{pgfplots}
\usepgfplotslibrary{groupplots,colormaps}
\usetikzlibrary{arrows.meta,decorations.pathreplacing}
\usepackage[section]{placeins}
\pgfplotsset{compat=1.18}

\usepackage[
    colorlinks=true,
    linkcolor=blue,
    citecolor=blue,
    urlcolor=blue
]{hyperref}

\usepackage[nameinlink,capitalise,noabbrev]{cleveref}

\numberwithin{equation}{section}

\newtheorem{theorem}{Theorem}[section]
\newtheorem{lemma}[theorem]{Lemma}

\theoremstyle{definition}
\newtheorem{definition}[theorem]{Definition}

\theoremstyle{remark}
\newtheorem{remark}[theorem]{Remark}

\newcommand{\R}{\mathbb{R}}
\newcommand{\Sph}{\mathbb{S}}

\newcommand{\tr}{\operatorname{tr}}
\newcommand{\sgn}{\operatorname{sgn}}
\newcommand{\cof}{\operatorname{cof}}
\newcommand{\diag}{\operatorname{diag}}
\newcommand{\Int}{\operatorname{int}}

\newcommand{\eps}{\varepsilon}

\definecolor{SLCESeed}{HTML}{4B5563}
\definecolor{SLCEPost}{HTML}{C65D21}
\definecolor{SLCEOther}{HTML}{2F6F9F}
\definecolor{SLCESingular}{HTML}{A61B29}
\definecolor{SLCELimit}{HTML}{111827}
\definecolor{SLCESmoothA}{HTML}{D97706}
\definecolor{SLCESmoothB}{HTML}{2563A6}
\definecolor{SLCESmoothC}{HTML}{14866D}
\definecolor{SLCEGrid}{HTML}{D8DEE8}

\tikzset{
  slce point/.style={circle, fill=SLCESingular, draw=white,
    line width=0.35pt, inner sep=1.55pt},
  slce label/.style={font=\scriptsize, fill=white, fill opacity=0.92,
    text opacity=1, rounded corners=0.6pt, inner sep=1.3pt},
  slce arrow/.style={-{Stealth[length=1.8mm,width=1.25mm]},
    line width=0.55pt},
}

\pgfplotsset{
  slce axis/.style={
    tick label style={font=\scriptsize},
    label style={font=\small},
    title style={font=\small, align=center},
    axis line style={black!70, line width=0.45pt},
    tick style={black!65, line width=0.4pt},
    major grid style={draw=SLCEGrid, line width=0.25pt},
    grid=major,
    line cap=round,
    line join=round,
    clip=false,
  },
}

\pgfmathsetmacro{\figTau}{1.00}
\pgfmathsetmacro{\figAlpha}{0.75}
\pgfmathsetmacro{\figWalpha}{sqrt(1+\figAlpha*\figAlpha)}
\pgfmathsetmacro{\figRzero}{1/(\figTau*\figWalpha)}
\pgfmathsetmacro{\figCusp}{0.62}
\pgfmathsetmacro{\figLocalZmax}{0.34}
\pgfmathsetmacro{\figGraphZmax}{0.40}
\pgfmathsetmacro{\figYmax}{0.44}
\pgfmathsetmacro{\figSurfaceZmax}{0.46}

\pgfmathdeclarefunction{figrhoN}{1}{\pgfmathparse{1.35 + 0.22*(#1) - 0.06*(#1)^2}}
\pgfmathdeclarefunction{figZN}{1}{\pgfmathparse{-0.50*(#1) - 0.04*(#1)^2}}
\pgfmathdeclarefunction{figdrhoN}{1}{\pgfmathparse{0.22 - 0.12*(#1)}}
\pgfmathdeclarefunction{figdZN}{1}{\pgfmathparse{-0.50 - 0.08*(#1)}}
\pgfmathdeclarefunction{figspeedN}{1}{\pgfmathparse{sqrt((figdrhoN(#1))^2 + (figdZN(#1))^2)}}
\pgfmathdeclarefunction{fignurhoN}{1}{\pgfmathparse{-(figdZN(#1))/figspeedN(#1)}}
\pgfmathdeclarefunction{fignuZN}{1}{\pgfmathparse{figdrhoN(#1)/figspeedN(#1)}}
\pgfmathdeclarefunction{figrhoM}{1}{\pgfmathparse{figrhoN(#1) - \figTau*fignurhoN(#1)}}
\pgfmathdeclarefunction{figZM}{1}{\pgfmathparse{figZN(#1) - \figTau*fignuZN(#1)}}

\pgfmathdeclarefunction{figxPost}{1}{\pgfmathparse{\figAlpha*(#1) + \figCusp*(#1)^(1.5)}}
\pgfmathdeclarefunction{figxOther}{1}{\pgfmathparse{\figAlpha*(#1) - \figCusp*(#1)^(1.5)}}
\pgfmathdeclarefunction{figRPost}{1}{\pgfmathparse{\figRzero + \figAlpha*(#1) + \figCusp*(#1)^(1.5)}}

\pgfmathsetmacro{\figLimitC}{0.78}
\pgfmathsetmacro{\figRhoLimit}{1.15}
\pgfmathsetmacro{\figLimitZmax}{1.00}
\pgfmathsetmacro{\figLimitYmax}{0.88}
\pgfmathsetmacro{\figEpsA}{0.28}
\pgfmathsetmacro{\figEpsB}{0.12}
\pgfmathsetmacro{\figEpsC}{0.045}

\title[Counterexamples for the SLCE]
{Some Counterexamples for the Special Lagrangian Curvature Equation}

\author{Guohuan Qiu}
\address{
Institute of Mathematics,
Academy of Mathematics and Systems Science,
Chinese Academy of Sciences,
No.\ 55 Zhongguancun East Road,
Beijing 100190,
China
}
\email{qiugh@amss.ac.cn}

\author{Guanyu Tao}
\address{
Institute of Mathematics,
Academy of Mathematics and Systems Science,
Chinese Academy of Sciences,
No.\ 55 Zhongguancun East Road,
Beijing 100190,
China
}
\email{taoguanyu@amss.ac.cn}

\subjclass[2020]{
Primary 35J60, 53C42;
Secondary 35B45, 49L25, 53A10
}

\keywords{
Special Lagrangian curvature equation,
viscosity solutions,
a priori estimates,
focal sets,
Lipschitz singularities,
parallel surfaces
}

\begin{document}

\begin{abstract}
We construct three counterexamples for the special Lagrangian curvature equation (SLCE). First, in dimension two, we use a post-focal branch of a parallel surface with constant positive Gauss curvature to construct an explicit Lipschitz viscosity solution which is not $C^1$. Second, still in dimension two, we construct a sequence of smooth admissible solutions on a fixed rectangle with uniformly bounded $C^1$-norm but unbounded curvature at one point; furthermore, we show that any uniform $C^{1,\beta}$ estimate fails for $\beta > 1/3$. Third, in dimension three and in the subcritical phase, we construct a Mooney-Savin type Lipschitz viscosity solution whose gradient has a jump discontinuity across an analytic surface.

These examples demonstrate the sharpness of the recent a priori estimates by Qiu and Zhou, revealing that their structural assumptions—convexity and the critical phase—are strictly necessary.
\end{abstract}

\maketitle

\section{Introduction}\label{sec:introduction}

Let $M^n\subset \R^{n+1}$ be an oriented hypersurface, and let
\[
    \kappa=(\kappa_1,\ldots,\kappa_n)
\]
denote its principal curvatures with respect to the chosen unit normal.
The special Lagrangian curvature equation (SLCE) is
\begin{equation}\label{eq:SLCE-intro}
    \sum_{i=1}^n \arctan \kappa_i=\Theta.
\end{equation}
It was introduced by Smith \cite{Smi13} as a curvature analogue of the
classical special Lagrangian equation originating in the work of Harvey and
Lawson \cite{HL82}.  For a graph $M=\{(x,u(x))\}$, the nonlinearity in
\eqref{eq:SLCE-intro} acts on the shape operator of the graph, rather than
directly on the Hessian $D^2u$.  Consequently, its regularity theory reflects
both the fully nonlinear structure of the equation and the extrinsic geometry
of the graph.

Interior second-derivative estimates for fully nonlinear elliptic equations
are known to depend strongly on dimension, phase, and convexity.  In dimension
two, Heinz obtained the fundamental interior estimate for the
Monge--Amp\`ere equation and the Weyl embedding problem \cite{Hei59}, whereas
Pogorelov constructed non-$C^2$ convex solutions in higher dimensions
\cite{Pog78}; see also Urbas \cite{Urb90} for related Hessian equations.
For the classical special Lagrangian equation, interior Hessian and gradient
estimates in the critical and supercritical regimes were established by
Warren--Yuan and Wang--Yuan \cite{WY10,WY14}, and the convex case was treated
by Chen--Warren--Yuan \cite{CWY09}.  Related curvature estimates for
hypersurface equations were obtained, among others, by Sheng--Urbas--Wang
\cite{SUW04}, Guan--Qiu \cite{GQ19}, Qiu \cite{Qiu24}, and
Guan--Ren--Wang \cite{GRW15}.

Qiu and Zhou initiated the interior regularity theory for the SLCE in
\cite{QZ24}.  They proved interior curvature estimates for smooth graphical
solutions in the critical phase
\[
    \Theta=\frac{(n-2)\pi}{2}
\]
and in the convex case, together with interior gradient estimates for every
constant phase.  Their argument combines the bounded-mean-curvature geometry
of the graph $(X,\nu)$, the Michael--Simon mean-value inequality
\cite{MS73}, a strong trace Jacobi inequality, and a modified integral method
in the spirit of Warren--Yuan \cite{WY09} and Wang--Yuan \cite{WY14}.
The same analysis also reveals the principal obstruction in the nonconvex
regime: away from the critical phase, a curvature term in the Jacobi inequality
may have the wrong sign.

The purpose of this paper is to show that these restrictions reflect genuine
geometric phenomena rather than limitations of the method.  We construct three
counterexamples.  The first two are two-dimensional and arise from focal
degeneration of parallel surfaces; the third is a three-dimensional
subcritical construction of Mooney--Savin type.  Together they isolate three
distinct mechanisms: a reflected first-order focal singularity, smooth
higher-order focal concentration, and Bellman--Legendre collapse.

We begin with the two-dimensional geometry.  With the convention
\[
    H=\kappa_1+\kappa_2,
    \qquad
    K=\kappa_1\kappa_2,
\]
the SLCE becomes
\[
    \cos\Theta\,H-\sin\Theta\,(1-K)=0.
\]
For
\[
    0<\Theta<\frac{\pi}{2},
    \qquad
    \tau=\cot\Theta>0,
\]
the phase branch considered in this paper is equivalently described by
\begin{equation}\label{eq:linear-Weingarten-intro}
    K+\tau H=1,
\end{equation}
together with
\begin{equation}\label{eq:admissible-branch-intro}
    \kappa_1+\tau>0,
    \qquad
    \kappa_2+\tau>0.
\end{equation}
Indeed,
\[
    K+\tau H=1
    \quad\Longleftrightarrow\quad
    (\kappa_1+\tau)(\kappa_2+\tau)=1+\tau^2,
\]
and \eqref{eq:admissible-branch-intro} selects the required branch of the
arctangent equation.  Here ``admissible'' refers specifically to this
phase-branch condition; it is not a convexity assumption.

The form \eqref{eq:linear-Weingarten-intro} makes parallel surfaces natural.
Let $\Sigma_0\subset\R^3$ be an oriented surface with position vector $X$,
unit normal $\nu_0$, and principal curvatures $\lambda_1,\lambda_2$.  For the
signed parallel surface
\[
    \Sigma_t=X+t\nu_0,
\]
one has, wherever $\Sigma_t$ is regular,
\[
    \lambda_i(t)=\frac{\lambda_i}{1-t\lambda_i},
    \qquad i=1,2.
\]
If $\Sigma_0$ has constant positive Gauss curvature
\[
    K_0=\frac{1}{1+\tau^2},
\]
then the parallel surface at signed distance $t=-\tau$ satisfies
\eqref{eq:linear-Weingarten-intro} wherever it is regular.  Its singular set is
the focal set determined by
\[
    1+\tau\lambda_i=0.
\]
Thus the same Jacobi factor that causes the parallel map to lose rank also
forces a principal curvature of the SLCE surface to blow up.

This construction belongs to the classical geometry of parallel surfaces and
the modern theory of fronts.  The relation between constant positive Gauss
curvature and constant mean curvature parallel surfaces goes back to Bonnet;
see \cite{GMM03,Dar90}.  Focal sets and singularities of the normal map were
studied by Porteous \cite{Por71} and Bruce--Wilkinson \cite{BW91}, while
Saji--Umehara--Yamada \cite{SUY09} and Fukui--Hasegawa \cite{FH12}
developed local criteria for singularities of fronts and parallel surfaces.
For the present paper, the relevant point is that a transverse focal crossing
produces a cuspidal-edge-type local model, whereas a higher-order vanishing of
the Jacobi factor produces a smoother but curvature-concentrating degeneration.

There is also a useful connection with optimal transport.  Qiu and Zhou
observed in \cite[Section~6]{QZ24} that the graphical form of
\eqref{eq:linear-Weingarten-intro} is related to a Monge--Amp\`ere type
transport equation with relativistic cost \cite{Bre03,BP13}
\[
    c(x,y)=-\sqrt{\tan^2\Theta-|x-y|^2}
\]
and densities
\[
    f=\frac{1}{\cos^2\Theta},
    \qquad g=1.
\]
Equivalently, one may consider the signed family
\[
    c_\tau(x,y)
    =-\sgn(\tau)\sqrt{\tau^{-2}-|x-y|^2},
    \qquad \tau\neq0.
\]
The associated Ma--Trudinger--Wang tensor \cite{MTW05} has the unfavorable
sign when $\tau>0$, while the sign is favorable for $\tau\leq0$ (with
$\tau=0$ understood as a limit).  This analogy is informative but does not
itself produce our examples: Loeper's construction \cite{Loe09} uses freedom
in the source and target densities and corresponds, in the present variables,
to an equation of the form
\[
    K+\tau H=F(x,u,\nabla u),
\]
rather than the constant equation \eqref{eq:linear-Weingarten-intro}.  Our
counterexamples therefore concern the SLCE itself, with fixed right-hand side.

Our first result is a geometrically explicit Lipschitz singular solution in 
dimension two. We exploit the focal set of parallel surfaces. Starting from a 
rotational surface of constant positive Gauss curvature, we take its signed parallel 
surface, choose a non-symmetric focal point, and retain the admissible post-focal branch. 
By reflecting this branch across the singular line, we obtain a local graph with an 
exact first-order corner, which is Lipschitz but not $C^1$. A direct test-function 
argument along this singular line then verifies the viscosity inequalities for the 
constant SLCE.  

\begin{theorem}\label{thm:two-dimensional-Lipschitz}
For every
\[
    0<\Theta<\frac{\pi}{2}
\]
and every $\alpha>0$, there exist a rectangle $Q\subset\R^2$ and a viscosity
solution
\[
    u\in C^{0,1}(Q)\setminus C^1(Q)
\]
of
\[
    \arctan\kappa_1+\arctan\kappa_2=\Theta
    \qquad\text{in }Q.
\]
Moreover, $u$ is a classical solution in $Q\setminus\{z=0\}$ and, as
$(y,z)\to(0,0)$,
\[
    u(y,z)
    =\alpha|z|
     -\frac{\tau\sqrt{1+\alpha^2}}{2}y^2
     +O\bigl(|z|^{3/2}+|y|^2|z|+|y|^4\bigr),
\]
where $\tau=\cot\Theta$.
\end{theorem}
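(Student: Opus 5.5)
We follow the construction outlined before the statement and proceed in four steps: build the rotational seed surface, locate its focal curve, study the post-focal branch as a graph, and then reflect it across the singular line.

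\emph{Step 1 (seed surface and parallel surface).} Fix $\tau=\cot\Theta$ and take a rotational surface $\Sigma_0$ of constant Gauss curvature $K_0=1/(1+\tau^2)$. We choose a non-spherical member of the family, e.g.\ the profile $\rho(s)=a\cos(s/\sqrt{1+\tau^2})$ with $a\neq\sqrt{1+\tau^2}$, so that its two principal curvatures are distinct. The function $1+\tau\lambda_i$ then vanishes transversally along some parallel circle. We form $\Sigma_{-\tau}=X-\tau\nu_0$. By the formula $\lambda_i(t)=\lambda_i/(1-t\lambda_i)$, this surface satisfies $K+\tau H=1$ wherever it is regular, and along the focal circle exactly one principal curvature, say $\kappa_1$, blows up.

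\emph{Step 2 (local model at the focal point).} Near a point $p$ of the focal circle, we use coordinates $(y,z)$ in which the focal curve of $\Sigma_{-\tau}$ sits along $z=0$. We write the parallel map in terms of the arclength $s-s_0$ and the rotation angle, and Taylor-expand using $\partial_s(1+\tau\lambda_1)\neq 0$. The image is then a cuspidal-edge front whose profile in the $(x,z)$ plane is a semicubical cusp $x\approx\alpha z\pm c z^{3/2}$; this is the model drawn in the figure macros, \texttt{figxPost}/\texttt{figxOther}.

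On the post-focal side, admissibility $\kappa_i+\tau>0$ holds, since $\kappa_1+\tau=(1+\tau^2)/(\kappa_2+\tau)$ and $\kappa_2$ stays bounded. That branch is therefore a smooth admissible solution over a half-rectangle $\{z>0\}$, and it extends up to $z=0$ as a $C^{1}$-up-to-$O(z^{1/2})$ graph with one-sided slope $\alpha$. The value of $\alpha$ is the tilt of the cusp tangent relative to the graph direction, which we steer by the choice of the non-symmetric focal point, i.e.\ by $a$ and the angle of the graph frame. Every $\alpha>0$ should be attainable by continuity. In the tangential direction $y$, the surface curvature is $\kappa_2\to -\tau+\ldots$ along the focal curve. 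Computing the $y^2$ coefficient of the graph then gives $-\tau\sqrt{1+\alpha^2}/2$: the factor $\sqrt{1+\alpha^2}$ comes from the metric factor $W$ of the tilted graph, and $\kappa_2$ evaluated on the edge equals $-\tau\cdot(\text{normal component})$. This yields the stated expansion with error $O(|z|^{3/2}+|y|^2|z|+|y|^4)$.

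\emph{Step 3 (reflection).} We set $u(y,z)=u_+(y,|z|)$. Reflection $z\mapsto -z$ is an isometry that preserves the equation, so $u$ is classical off $\{z=0\}$. Its gradient jumps from $-\alpha$ to $\alpha$ across the line, so $u$ is Lipschitz and not $C^1$.

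\emph{Step 4 (viscosity property on $z=0$), the main obstacle.} The graph has a convex corner ($+\alpha|z|$), so no smooth function touches $u$ from above at points of $\{z=0\}$, and the subsolution test is vacuous there. For the supersolution test, let $\varphi$ touch $u$ from below at $(y_0,0)$. Then $\partial_z\varphi\in[-\alpha,\alpha]$ and $\partial_{yy}\varphi\le u_{yy}(y_0,0)$. The equation operator is monotone in the Hessian, so we may freely send $\varphi_{zz}\to-\infty$ in the unfavourable direction only after checking which inequality is required.

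This is the delicate point, and we first try the following. The operator $\arctan\kappa_1+\arctan\kappa_2$ applied to $\varphi$ has $\kappa$ computed with slope $p=(\varphi_y,\varphi_z)$ where $|\varphi_z|\le\alpha$. The test can be arranged to have $\varphi_{zz}$ arbitrarily large positive, but it is bounded by the corner only in first order. Hence we need $\sup_{\varphi_{zz}}F\geq\Theta$, which holds because $\arctan\kappa_1\to\pi/2$ as $\varphi_{zz}\to+\infty$, while $\arctan\kappa_2\ge\arctan(-\tau\,\cdot)$ is controlled by the $y$-curvature. The precise verification requires $\kappa_2\geq$ the value giving $\pi/2+\arctan\kappa_2\ge\Theta$, i.e.\ $\kappa_2>-\tau$ for the tangential curvature of $\varphi$. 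We obtain this from $\varphi_{yy}\le u_{yy}=-\tau\sqrt{1+\alpha^2}$ together with the bound $|\varphi_z|\le\alpha$ via the graph curvature formula. The sign bookkeeping has to come out right here, with the relevant phase branch handled correctly, and this is where the exact coefficient from Step 2 is used. If this route fails, we instead verify the viscosity property by approximating $u$ locally uniformly by the smooth classical branches $\pm$ and applying the stability of viscosity solutions on each side, with a separate corner argument.
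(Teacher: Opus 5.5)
\textbf{The gap is in Step 4, where the inequality is reversed.} With the convention of Definition~\ref{def:viscosity-solution}, the supersolution test at $P_*=(y_*,0)$ requires $\arctan k_1+\arctan k_2\le\Theta$ for \emph{every} $C^2$ function $\varphi$ touching $u$ from below. Here $k_1\le k_2$ are the principal curvatures of the graph of $\varphi$.

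Since $\varphi_{zz}(P_*)$ is unconstrained at a convex corner, $k_2$ can be arbitrarily large. So what must be shown is $k_1\le-\tau$. You instead aim for $\sup F\ge\Theta$ and $\kappa_2>-\tau$. If that were true it would refute the supersolution property, and the ingredients you cite actually give the opposite bound.

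Concretely, this is the paper's argument. Put $X_*=\sqrt{R_0^2-y_*^2}$. Then $\varphi_y(P_*)=u_y(P_*)=-y_*/X_*$ and $\varphi_{yy}(P_*)\le u_{yy}(P_*)=-R_0^2/X_*^3$. Also $|\varphi_z(P_*)|\le\alpha R_0/X_*$; away from $y_*=0$ the one-sided slopes are $\pm\alpha R_0/X_*$, not $\pm\alpha$. Hence $\sqrt{1+|D\varphi|^2}\le R_0\sqrt{1+\alpha^2}/X_*=1/(\tau X_*)$. The normal curvature in the direction $\partial_y$ then satisfies $k_y=\varphi_{yy}/\bigl(\sqrt{1+|D\varphi|^2}\,(1+\varphi_y^2)\bigr)\le-\tau$. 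As $k_1\le k_y$, you get $\arctan k_1+\arctan k_2<-\arctan\tau+\pi/2=\Theta$.

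This inequality is borderline. It closes only because of the exact identity $R_0\sqrt{1+\alpha^2}=1/\tau$, i.e.\ $\kappa_\theta=-\tau$ at every point of the focal circle. So you need the exact one-sided data along the whole line $\{z=0\}$, not just the coefficients at the origin. Your fallback does not repair this: approximating by the one-sided classical branches gives no information at points of $\{z=0\}$, which is exactly where the test must be checked.

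\textbf{Steps 1--3.} These are otherwise the paper's construction, up to a shift in $s$. The paper takes $r''+K_0r=0$ normalized by $J_s(0)=0$ and $-r_0'/\zeta_0'=\alpha$. This is your barrel profile $a\cos(\sqrt{K_0}s)$ with $1+\alpha^2=1/(\tau^2(K_0a^2-1))$. Several claims need correcting:
\begin{itemize}
\item A transverse zero of $1+\tau\lambda_s$ exists only for $\sqrt{1+\tau^2}<a<(1+\tau^2)/\tau$, not for all $a\neq\sqrt{1+\tau^2}$. Otherwise there is either no zero or, for $a=(1+\tau^2)/\tau$, the double zero of \cref{thm:two-dimensional-C2-failure}.
\item The graph frame cannot be tilted. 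Reflection across $\{z=0\}$ needs the focal circle to project onto that line, which forces the $z$-axis to be parallel to the rotation axis. So $\alpha$ is set by $a$ alone, and the formula above shows that every $\alpha>0$ occurs.
\item Admissibility does not come from boundedness of $\kappa_2$, which in fact tends to $-\tau$. It comes from $\kappa_s+\tau=((1+\tau^2)J_s-1)/(\tau J_s)>0$ where $J_s<0$.
\end{itemize}
Finally, write the branch explicitly as $u_+=\sqrt{R(z)^2-y^2}-R_0$ with $R(z)=R_0+\alpha z+O(z^{3/2})$. This gives both the stated expansion and the exact one-sided data needed above.
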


Our second result reveals that smooth higher-order focal concentration essentially 
corresponds to the degenerate case $\alpha=0$ in the preceding Lipschitz construction. 
Because $\alpha=0$, the parallel surface no longer forms a corner requiring even 
reflection; rather, it naturally maintains a smooth connection at the focal point. 
Without needing reflection, we directly estimate the regularity near the focal point 
by analyzing a one-parameter family of smooth admissible branches approaching this 
degeneration. This family ultimately converges to a limiting profile with a precise 
$\vert{}z\vert{}^{4/3}$ focal scaling. This effectively identifies $C^{1,1/3}$ as the 
exact critical regularity threshold for nonconvex solutions.

\begin{theorem}\label{thm:two-dimensional-C2-failure}
For every
\[
    0<\Theta<\frac{\pi}{2},
\]
there exist a rectangle $Q\subset\R^2$ containing the origin and a sequence
$u_\eps$ of functions, smooth on a neighborhood of $Q$, whose
graphs solve
\[
    \arctan\kappa_1+\arctan\kappa_2=\Theta
    \qquad\text{in }Q
\]
on the branch \eqref{eq:admissible-branch-intro}, such that
\[
    \sup_\eps\|u_\eps\|_{C^{1,1/3}(Q)}<\infty,
    \qquad
    |D^2u_\eps(0)|\longrightarrow\infty.
\]
For every $\beta\in(1/3,1]$,
\[
    [Du_\eps]_{C^{0,\beta}(Q)}\longrightarrow\infty.
\]
Moreover, $u_\eps$ converges uniformly to a limiting profile $u_0$ with the
exact focal asymptotic
\[
    u_0(0,z)
    =\frac{3c_\Theta}{4}|z|^{4/3}+O(z^2),
    \qquad
    c_\Theta=\left(\frac{6}{\tau K_0}\right)^{1/3}.
\]
In particular, $u_0$ is $C^{1,1/3}$ but is not $C^2$ in the focal direction.
\end{theorem}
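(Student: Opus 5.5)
The plan is to realize $u_\eps$ as pieces of signed parallel surfaces $\Sigma_{-\tau}=X-\tau\nu_0$ of rotational surfaces $\Sigma_0^\eps$ of constant Gauss curvature $K_0=1/(1+\tau^2)$. As recalled in the introduction, these satisfy $K+\tau H=1$ wherever they are regular. A point of $\Sigma_{-\tau}$ is singular exactly when the Jacobi factor $1+\tau\lambda_i$ vanishes, and there the principal curvature $\lambda_i/(1+\tau\lambda_i)$ of the parallel surface blows up.

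In the Lipschitz construction (Theorem~\ref{thm:two-dimensional-Lipschitz}) this factor crosses zero transversally along a profile curve, producing the cuspidal edge with slope parameter $\alpha$. For the present theorem I would instead pick a one-parameter family of seed profiles with the following behaviour:
\begin{itemize}
\item for the limit $\eps=0$, the meridian principal curvature $\lambda_1$ of $\Sigma_0^0$ touches $-1/\tau$ from one side at a single meridian point $s=0$, to second order: $1+\tau\lambda_1(s)=a s^2+O(s^3)$ with $a>0$;
\item for $\eps>0$, one has $1+\tau\lambda_1^\eps(s)=\eps+a s^2+\dots$, which stays strictly positive.
\end{itemize}
This is the ``$\alpha=0$'' degeneration. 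Since $\lambda_1$ satisfies the ODE of the Gauss curvature equation for rotational surfaces, the family can be obtained by varying the ODE initial data (or the neck radius) continuously. The parallel surfaces $\Sigma^\eps_{-\tau}$ are then smooth immersions on which $\kappa_1+\tau=(1+\tau^2)\cdot(\text{something positive})$ and $\kappa_2+\tau>0$, so they lie on the branch \eqref{eq:admissible-branch-intro}. Near the focal point I would choose coordinates $(y,z)$ with $z$ along the image of the meridian. After checking that the normal is not horizontal there, each surface is a graph $u_\eps$ over a fixed rectangle $Q$, with $C^1$ bounds uniform in $\eps$.

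Next comes the profile computation along the meridian. Parametrize the meridian of $\Sigma_0$ by arclength $s$ with normal angle $\phi(s)$, so that $\phi'=\lambda_1$. The parallel curve $\gamma_\eps=\gamma-\tau\nu_0$ has speed $|1+\tau\lambda_1|$ and the same normal angle $\phi$. Its tangent angle is therefore $\phi$, and its arclength is $\sigma(s)=\int_0^s(1+\tau\lambda_1)\approx \eps s+a s^3/3$. In the limit $\eps=0$ this gives $\sigma\sim (a/3)s^3$. Meanwhile the slope angle satisfies $\phi(s)-\phi(0)=\int\lambda_1=-s/\tau+O(s^2)$, which is linear in $s$ and hence of order $\sigma^{1/3}$.

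Thus $u_0'(z)\approx c|z|^{1/3}\operatorname{sgn} z$, which integrates to $u_0\approx\frac{3c}{4}|z|^{4/3}$. To get the sign $|z|^{4/3}$ rather than an odd profile, I will need to make the remaining orientation choices consistently. The constant $a$ would be expressed through $K_0$ using the Gauss/Codazzi ODE for rotational surfaces: at a critical point of $\lambda_1$, differentiating $\lambda_1\lambda_2=K_0$ twice links $\lambda_1''$ to $K_0$ and the rotational data. Matching $c=(6/(\tau K_0))^{1/3}$ is a bookkeeping check that may require a specific normalization of the seed. The $O(z^2)$ remainder comes from the next Taylor terms.

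The estimates then follow directly from this model. For each $\eps>0$ one has $|D^2u_\eps(0)|\sim|\kappa_1|\sim 1/(\tau\eps)\to\infty$. On the other hand, $u_\eps'$ as a function of $z$ is a regularized inverse of $\sigma\mapsto s$. Because $\sigma_\eps(s)\ge a s^3/3$, the oscillation bound $|\Delta\phi|\lesssim|\Delta\sigma|^{1/3}$ holds uniformly in $\eps$, and this gives a uniform $C^{1,1/3}$ bound. The $y$-direction is harmless since $\lambda_2$ stays bounded away from $-1/\tau$. For $\beta>1/3$, comparing the points $z=0$ and $z=\sigma_\eps(s)$ with $s\sim\eps^{1/2}$ gives a difference quotient of order $\eps^{1/2}/\eps^{3\beta/2}\to\infty$.

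Uniform convergence of $u_\eps$ to $u_0$ follows from continuous dependence of the ODE solution on $\eps$. The step I expect to be the main obstacle is the existence of the seed family itself: one needs a rotational constant-$K$ surface whose $\lambda_1$ has a nondegenerate extremum exactly at the level $-1/\tau$, together with the explicit identification of $a$ that yields the stated $c_\Theta$. I would first try to settle this using the explicit elliptic-integral parametrization of rotational $K$-surfaces.
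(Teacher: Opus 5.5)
\textbf{There is a genuine gap: your family sits on the wrong side of the focal set, so it solves the wrong branch of the equation.}

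Your overall architecture is the paper's. It uses a rotational seed with $K=K_0$, the parallel surface $X-\tau\nu_0$, a cubic relation between the meridian parameter and $z$, the uniform $1/3$-H\"older bound, and the test point $s\sim\eps^{1/2}$ giving $\eps^{(1-3\beta)/2}$. The problem is that you prescribe $J_1:=1+\tau\lambda_1=\eps+as^2>0$, which is the pre-focal side. With $\kappa_i=\lambda_i/J_i$ and $\lambda_1\lambda_2=K_0=(1+\tau^2)^{-1}$,
\[
\kappa_1+\tau=\frac{(1+\tau^2)J_1-1}{\tau J_1},
\qquad
\kappa_2+\tau=\frac{\tau+\lambda_1^{-1}}{J_2}.
\]
Near the focal point $J_1\downarrow0$ and $J_2\to K_0>0$. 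Hence the first quantity tends to $-\infty$. The second is negative, because $-1/\tau<\lambda_1<0$ forces $\lambda_1^{-1}<-\tau$. So your surfaces satisfy $K+\tau H=1$ with both $\kappa_i+\tau<0$. That is, $\arctan\kappa_1+\arctan\kappa_2=\Theta-\pi$, or $\pi-\Theta$ after reversing orientation, but not $\Theta$. Your assertion that $\kappa_1+\tau$ is $(1+\tau^2)$ times something positive is false there. Only the post-focal side $J_1<0$, where $\kappa_1\to+\infty$, is admissible. This is exactly why the paper normalizes $J_{s,\eps}(0)=-\tau\eps$.

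\textbf{The seed family you ask for also cannot exist among rotational surfaces.} The Codazzi identity $\lambda_\theta'=(\lambda_s-\lambda_\theta)r'/r$, together with $\lambda_s\lambda_\theta=K_0$, shows that $\lambda_s$ can be critical near level $-1/\tau$ only at an equator $r'=0$. (There is no umbilic there, since $\lambda_\theta\approx-\tau K_0\neq-1/\tau$.) At the equator $r=A\cos(\sqrt{K_0}s)$, because $r''+K_0r=0$ in arclength. If $\lambda_s(0)=-K_0A=-1/\tau$, one computes
\[
1+\tau\lambda_s=-\frac{K_0}{2\tau^2}s^2+O(s^4).
\]
So $a<0$ is forced. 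The computation of $a$ you planned via the Gauss/Codazzi ODE would have exposed this.

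\textbf{The remedy is a single sign change, and it removes what you flagged as the main obstacle.} Take
\[
r_\eps=\frac{1/\tau+\eps}{K_0}\cos(\sqrt{K_0}s),
\]
so that $1+\tau\lambda_s=-\tau\eps-\frac{K_0}{2\tau^2}s^2+\dots$, and keep the post-focal parallel surface. No explicit elliptic-integral parametrization is needed. The rest of your profile analysis then goes through with $|1+\tau\lambda_1|$ in place of $1+\tau\lambda_1$. It gives $|z|\approx\frac{K_0}{6\tau^2}|s|^3$, slope $\approx -s/\tau$, and hence
\[
c_\Theta=\tau^{-1}\left(\frac{6\tau^2}{K_0}\right)^{1/3}=\left(\frac{6}{\tau K_0}\right)^{1/3}.
\]
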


The solutions in \cref{thm:two-dimensional-C2-failure} are necessarily
nonconvex.  Adding flat variables gives analogous smooth examples in every
dimension $n\geq2$: the new principal curvatures are zero, so the numerical
phase is unchanged, although its position relative to the
higher-dimensional critical threshold changes.

Our third result concerns the subcritical phase in dimension three.  For the
classical special Lagrangian equation, Nadirashvili--Vl\u{a}du\c{t}
\cite{NV10} and Wang--Yuan \cite{WY13} constructed $C^{1,\alpha}$ singular
solutions below the critical phase, and Mooney--Savin \cite{MS24} constructed
viscosity solutions that are Lipschitz but not $C^1$.  We obtain the analogous
failure of $C^1$ regularization for the three-dimensional SLCE.

\begin{theorem}\label{thm:three-dimensional-Lipschitz}
For every
\[
    0<\Theta<\frac{\pi}{2},
\]
there exist a smooth bounded domain $\Omega\subset\R^3$, a compact
real-analytic embedded surface with boundary $\Gamma\Subset\Omega$, and a
viscosity solution
\[
    u\in C^{0,1}(\Omega)\setminus C^1(\Omega)
\]
of
\[
    \sum_{i=1}^3\arctan\kappa_i=\Theta
    \qquad\text{in }\Omega.
\]
Moreover, $u$ is a classical solution in $\Omega\setminus\Gamma$, and $Du$
has a jump discontinuity across $\Gamma\setminus\partial\Gamma$.
\end{theorem}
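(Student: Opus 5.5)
We follow the Mooney--Savin strategy for the special Lagrangian equation \cite{MS24} and adapt it to the curvature operator. The first step is to recast the SLCE as a fully nonlinear elliptic equation $F(Du,D^2u)=\Theta$, where $F=\sum\arctan\kappa_i\bigl(A(Du,D^2u)\bigr)$ and $A=(1+|Du|^2)^{-1/2}g^{-1/2}D^2u\,g^{-1/2}$ with $g=I+Du\otimes Du$ is the graph shape operator. Next we pass to the Legendre (Bellman) side. The Mooney--Savin mechanism produces a Lipschitz function $u=\max(u^+,u^-)$ or $\min$, built from two smooth solutions $u^\pm$ that agree along an analytic surface $\Gamma$ where their gradients differ. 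The key degeneracy, which makes the viscosity inequalities hold across the kink, is that at points of $\Gamma$ a whole segment of supporting planes $p\in[Du^-,Du^+]$ must admit admissible second-order test data realizing $F=\Theta$. This is possible only in the subcritical phase $0<\Theta<\pi/2$ with $n=3$, where the level set $\{F=\Theta\}$ contains matrices with one very negative eigenvalue: take $\kappa_1\to-\infty$ and $\arctan\kappa_2+\arctan\kappa_3\to\Theta+\pi/2<\pi$, which is compatible with two finite curvatures.

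Concretely, we plan the following steps.

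(i) Fix $\Gamma$ as a small piece of an analytic surface, say near a plane $\{x_3=0\}$ in a neighborhood of the origin. Prescribe the jump vector $Du^+-Du^-=2a e_3$ along $\Gamma$.

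(ii) Solve a Cauchy problem for the SLCE on each side via Cauchy--Kovalevskaya. The equation is analytic and the data are analytic. We choose Cauchy data $u^\pm|_\Gamma=\phi$ and $\partial_3u^\pm=\pm a+\psi$ so that $\Gamma$ is noncharacteristic for both. This gives analytic solutions $u^\pm$ in a neighborhood.

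(iii) Define $u=\min(u^+,u^-)$, or $\max$ depending on sign conventions, arranged so that $u=u^-$ on $\{x_3>0\}$ and $u=u^+$ on $\{x_3<0\}$, which yields a concave-type kink.

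(iv) Verify the viscosity inequalities. Away from $\Gamma$, $u$ is classical. On $\Gamma$, a concave kink admits no $C^2$ test function touching from below, so the supersolution property is vacuous. For touching from above, any test function $\varphi$ has $D\varphi(x_0)=\theta Du^++(1-\theta)Du^-$, and $D^2\varphi$ is bounded below by nothing in the $e_3$ direction. We must show $F(D\varphi,D^2\varphi)\ge\Theta$ in the appropriate direction. Ellipticity lets us replace $D^2\varphi$ by a matrix with arbitrarily large positive $e_3e_3$ entry, so we need the sub/supersolution condition on the interpolated gradients.

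The main obstacle is step (iv), and with it the choice of Cauchy data in (ii). The viscosity condition at kink points reduces to an inequality of the form
\[
\sup_{\theta\in[0,1]}\Bigl(\Theta-F\bigl(p_\theta,\,M_\theta+\lambda e_3\otimes e_3\bigr)\Bigr)\le0
\]
for the relevant one-sided sign. Here $M_\theta$ is the tangential Hessian common to both branches. Since $\arctan$ saturates at $\pi/2$, letting $\lambda\to\infty$ forces one curvature to $+\infty$ and leaves a two-dimensional condition on the tangential curvatures. So we choose $\phi$ along $\Gamma$, with tangential Hessian strongly saddle-shaped, so that for every interpolated gradient $p_\theta$ the remaining two curvature angles sum to at most $\Theta-\pi/2<0$ (respectively at least). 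This is exactly where $\Theta<\pi/2$ and $n=3$ enter. The interpolated gradient $p_\theta$ also enters the metric $g$, so the tangential curvatures depend on $\theta$. We handle this by taking $a$ small relative to the tangential Hessian and checking the inequality uniformly in $\theta$ by continuity.

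Finally we shrink to a smooth bounded $\Omega$ and a compact piece $\Gamma\Subset\Omega$. The jump $2ae_3\neq0$ gives $u\notin C^1$, and $u$ is Lipschitz as a minimum of smooth functions.
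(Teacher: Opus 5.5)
There is a genuine gap in step (iv), and no choice of Cauchy data repairs it. A kink $u=\min(u^+,u^-)$ between two solutions that are $C^2$ up to $\Gamma$ (which is all Cauchy--Kovalevskaya gives you in step (ii)) is never a viscosity subsolution.

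First, your reduction runs in the wrong direction. At $x_0\in\Gamma$, a test function touching $u$ from above with $D\varphi(x_0)=p_\theta$, $0<\theta<1$, may have $\varphi_{33}(x_0)=-N$ for every $N$, because the linear gap of order $|x_3|$ absorbs it. By monotonicity the dangerous limit is therefore $N\to\infty$, not a large positive $e_3e_3$ entry.

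Now take the tangential block of $D^2\varphi(x_0)$ equal to $A_\theta+\delta I$. Here $A_\theta=\theta A_\Gamma^++(1-\theta)A_\Gamma^-$ interpolates the tangential blocks of $D^2u^\pm(x_0)$, and it is the lower bound forced by $\varphi\ge u$ on $\Gamma$. As $N\to\infty$, the phase $\tr\arctan\bigl(P(p_\theta)^{-1/2}D^2\varphi\,P(p_\theta)^{-1/2}\bigr)$ decreases to $-\frac{\pi}{2}+\arctan k_1+\arctan k_2$. Here $k_1\le k_2$ are the eigenvalues of $A_\theta+\delta I$ relative to the restriction of $P(p_\theta)$ to $T_{x_0}\Gamma$. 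So you would need $\arctan k_1+\arctan k_2\ge\Theta+\frac{\pi}{2}$ for every $\theta$, i.e.\ tangentially strongly convex data, not saddle-shaped.

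But at $\theta=1$, $\delta=0$, the $k_i$ are tangential curvatures of the classical solution $u^+$. Cauchy interlacing for the compression of $S[u^+](x_0)$ to a $2$-plane gives $\kappa_1\le k_1\le\kappa_2\le k_2\le\kappa_3$ for its principal curvatures. Hence
\[
\arctan k_1+\arctan k_2\le\Theta-\arctan\kappa_1<\Theta+\frac{\pi}{2},
\]
because $\kappa_1$ is finite. By continuity, for $\theta$ close to $1$, $\delta$ small and $N$ large, the quadratic test function touches $u$ from above and has $F_\Theta(D\varphi(x_0),D^2\varphi(x_0))<0$. Taking $a$ small only pushes every $p_\theta$ toward the endpoints, so it makes matters worse. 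The mirrored choice ($\max$, supersolution, $\varphi_{33}\to+\infty$) fails the same way: there interlacing gives $\arctan k_1+\arctan k_2\ge\Theta-\arctan\kappa_3>\Theta-\frac{\pi}{2}$.

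The missing idea is that the one-sided branches must themselves blow up at $\Gamma$. One principal curvature has to tend to $-\infty$ while the other two angles sum to $\Theta+\frac{\pi}{2}$; only this makes the endpoint bound above borderline (an equality in the limit) instead of strict. Your heuristic about $\kappa_1\to-\infty$ on the level set points the right way. However, such branches are not $C^2$ up to $\Gamma$, so no noncharacteristic Cauchy problem for the SLCE itself can produce them.

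The paper realizes this degeneration on the Legendre side, where it becomes the vanishing of an eigenvalue of $D^2w$:
\begin{itemize}
\item On a strictly convex core $K$ it sets $w=\Phi^\lambda$, where $D^2w$ has rank two and $\mathcal F(x,D^2w)\le\Theta^*$.
\item Across $\partial K$, $w$ is continued by the Cauchy--Kovalevskaya solution of $\mathcal F(x,D^2v)=\Theta^*$, which is noncharacteristic in the dual variables and satisfies $\det D^2v<0$ on a collar.
\item $\nabla w$ then collapses the vertical fibres of $H(K)$ onto $\Gamma$. The Legendre transform $u$ has $D^2u=(D^2w)^{-1}$ off $\Gamma$, so one eigenvalue tends to $-\infty$ at $\Gamma$, and $Du$ jumps by the fibre length.
\item The subsolution property on $\Gamma$ comes from stability of the smooth strict subsolutions $u_k=(w-x_3^2/k)^*$, not from a pointwise superjet inequality.
\end{itemize}
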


For the subcritical phase in dimension three, our construction adapts the 
Bellman-Legendre collapse mechanism of Mooney-Savin. The core distinction is 
that our SLCE dual operator $\mathcal{F}(x, D^2w)$ depends explicitly on the spatial 
coordinate $x$ via congruence by the positive definite matrix $P(x)^{1/2}$. 
However, because the singularity is constructed near the origin, $P(0)^{1/2}$ is 
exactly the identity matrix. Thus, our operator is effectively the Mooney-Savin 
operator subjected to a small perturbation around the identity, which our calculations 
show does not destabilize the overall structural framework. To execute this 
systematically, we construct a strictly convex phase core where the Hessian has rank 
two, solve an analytic Cauchy problem across its boundary, and demonstrate a strict 
determinant sign change in a uniform exterior collar. The explicit spatial dependence 
cancels perfectly when comparing normal derivatives on the boundary because the inner 
and outer Hessians match exactly. This yields a $C^{2,1}$ dual potential whose gradient 
map collapses one-dimensional fibres onto an analytic surface. A local Legendre 
transform then converts this geometric collapse into a genuine gradient jump across 
the surface, with smooth strict subsolutions constructed to secure the viscosity 
inequalities on the singular set.

The two-dimensional examples and the three-dimensional example thus exhibit
different obstructions.  Focal degeneration prevents a curvature estimate from
following merely from positive phase and uniform $C^1$ control on the
nonconvex branch, while Bellman--Legendre collapse shows that subcritical
Lipschitz viscosity solutions need not become $C^1$.  In this sense, the phase
threshold and the convexity hypothesis in the known interior theory are
substantive features of the SLCE rather than technical artifacts.

\section{Preliminaries}\label{sec:preliminaries}

In this section, we introduce the graphical form of the special
Lagrangian curvature equation and fix the viscosity convention.  We
then record the Legendre-transform identity used in the
three-dimensional construction and the two-dimensional graphical
formulas needed below.

\subsection{The graphical equation and viscosity solutions}
\label{subsec:graphical-equation}

Let $\Omega\subset\R^n$, and let
\[
M=\{(x,u(x)):x\in\Omega\}\subset\R^{n+1}
\]
be the graph of a function $u\in C^2(\Omega)$. With respect to the
upward unit normal
\[
\nu=\frac{(-Du,1)}{\sqrt{1+|Du|^{2}}},
\]
the induced metric and the second fundamental form are
\[
g=I+Du\otimes Du,
\qquad
h=\frac{D^{2}u}{\sqrt{1+|Du|^{2}}}.
\]
Consequently, the shape operator is
\[
A[u]
=
g^{-1}h
=
\left(
I-\frac{Du\otimes Du}{1+|Du|^{2}}
\right)
\frac{D^{2}u}{\sqrt{1+|Du|^{2}}}.
\]

For $p\in\R^n$, define
\[
P(p)
:=
\sqrt{1+|p|^{2}}\,(I+p\otimes p).
\]
Then $P(p)$ is positive definite, and $A[u]$ is similar to the
symmetric matrix
\[
S[u]
:=
P(Du)^{-1/2}D^{2}u\,P(Du)^{-1/2}.
\]
In particular, the eigenvalues of $S[u]$ are the principal
curvatures of the graph. Therefore, the graphical SLCE is
\begin{equation}
\label{eq:graphical-SLCE}
\tr\bigl(\arctan S[u]\bigr)=\Theta.
\end{equation}
Here, for a symmetric matrix, the matrix arctangent is defined by
spectral calculus.

Let $\R_{\mathrm{sym}}^{n \times n}$ denote the space of $n \times n$ real symmetric matrices. For $p\in\R^n$ and
$X\in\R_{\mathrm{sym}}^{n\times n}$, set
\[
\begin{aligned}
F_{\Theta}(p,X)
&:=
\tr\!\left[
\arctan\!\left(
P(p)^{-1/2}XP(p)^{-1/2}
\right)
\right]-\Theta,
\\
\widetilde{F}_{\Theta}(p,X)
&:=
\tr\!\left[
\arctan\!\left(
P(p)^{1/2}XP(p)^{1/2}
\right)
\right]-\Theta.
\end{aligned}
\]
Thus, equation \eqref{eq:graphical-SLCE} is equivalent to
\[
F_{\Theta}(Du,D^{2}u)=0.
\]

\begin{lemma}\label{lem:operator-monotonicity}
For every fixed $p\in\R^{n}$, both operators
\[
X\longmapsto F_{\Theta}(p,X)
\qquad\text{and}\qquad
X\longmapsto\widetilde{F}_{\Theta}(p,X)
\]
are nondecreasing. More precisely, if
\[
Y-X\geq 0,
\]
then
\[
F_{\Theta}(p,Y)\geq F_{\Theta}(p,X)
\]
and
\[
\widetilde{F}_{\Theta}(p,Y)
\geq
\widetilde{F}_{\Theta}(p,X).
\]
\end{lemma}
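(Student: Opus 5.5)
The plan is to reduce both claims to a single statement about symmetric matrices: the map $X\mapsto \tr(\arctan X)$ is nondecreasing with respect to the Loewner order. Congruence by a positive definite matrix preserves that order, so this is enough. Fix $p$ and write $Q$ for either $P(p)^{-1/2}$ or $P(p)^{1/2}$. Both are symmetric and positive definite, since $P(p)$ is. If $Y-X\ge 0$, then for every vector $\xi$,
\[
\xi^{T}\bigl(QYQ-QXQ\bigr)\xi=(Q\xi)^{T}(Y-X)(Q\xi)\ge 0 .
\]
Hence $QYQ\ge QXQ$. Both operators therefore follow once we show that $\tr\arctan$ is monotone.

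For the matrix statement, I will take the derivative route. Set $X_s=X+s(Y-X)$ for $s\in[0,1]$ and $\phi(s)=\tr\arctan(X_s)$. The standard first-order perturbation formula for spectral functions gives
\[
\frac{d}{ds}\tr f(X_s)=\tr\bigl(f'(X_s)(Y-X)\bigr),
\]
which holds for any $C^1$ function $f$. With $f=\arctan$ this reads
\[
\phi'(s)=\tr\bigl[(I+X_s^2)^{-1}(Y-X)\bigr].
\]
The matrix $(I+X_s^2)^{-1}$ is positive definite. The trace of a product of a positive definite matrix and a positive semidefinite matrix is nonnegative: write it as $\tr(A^{1/2}BA^{1/2})\ge 0$. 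So $\phi'\ge 0$, and integrating from $0$ to $1$ gives $\tr\arctan(Y)\ge\tr\arctan(X)$.

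The only step that needs care is justifying the trace-derivative formula when $X_s$ has repeated eigenvalues. I would handle this with a fully elementary alternative that avoids differentiability. By Weyl's monotonicity theorem (Courant--Fischer), $Y\ge X$ implies $\lambda_i(Y)\ge\lambda_i(X)$ for each ordered eigenvalue. Since $\arctan$ is increasing on $\R$, summing gives
\[
\sum_i\arctan\lambda_i(Y)\ge\sum_i\arctan\lambda_i(X).
\]
This is the inequality we need, and I would present this eigenvalue-interlacing argument as the main proof. Subtracting $\Theta$ from both sides does not affect the inequality, which finishes both claims of \cref{lem:operator-monotonicity}.
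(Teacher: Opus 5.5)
Your proof is correct, and your reduction by congruence with the symmetric positive definite matrix $P(p)^{\mp 1/2}$ is exactly the paper's first step. The difference is in the core inequality for $\tr\arctan$. The paper uses only your ``derivative route'': it differentiates along $M_t=\widehat X+t(\widehat Y-\widehat X)$ and uses $\tr[(I+M_t^2)^{-1}(\widehat Y-\widehat X)]\ge 0$. You instead make Weyl's monotonicity theorem your main argument: by Courant--Fischer, $Y\ge X$ gives $\lambda_i(Y)\ge\lambda_i(X)$ for the ordered eigenvalues (this is monotonicity, not interlacing).

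Your version is purely order-theoretic and works for any nondecreasing scalar function with no smoothness at all. It also gives strict monotonicity when $Y\ge X$ and $Y\ne X$, since then $\tr Y>\tr X$ forces some $\lambda_i(Y)>\lambda_i(X)$. The paper's version buys the explicit linearization $(I+\widehat M^2)^{-1}$. It reuses this later as the positive definite coefficient matrix $\mathcal{F}^{ij}$ in \eqref{eq:5-ellipticity}, both for the noncharacteristic Cauchy data and for the strict inequality for $w_k$.

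Your worry about repeated eigenvalues is unnecessary. For trace functions, the identity $\frac{d}{ds}\tr f(X_s)=\tr\bigl(f'(X_s)\dot X_s\bigr)$ holds for every $C^1$ function $f$ regardless of multiplicities. You can check it on polynomials using cyclicity of the trace, then approximate $f$ in $C^1$. So either of your two arguments is complete.
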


\begin{proof}
For $F_{\Theta}$, set
\[
\widehat{X}
:=
P(p)^{-1/2}XP(p)^{-1/2},
\qquad
\widehat{Y}
:=
P(p)^{-1/2}YP(p)^{-1/2}.
\]
The inequality $Y-X\geq 0$ implies
\[
\widehat{Y}-\widehat{X}\geq 0.
\]
Let
\[
M_{t}
:=
\widehat{X}
+t(\widehat{Y}-\widehat{X}),
\qquad
0\leq t\leq 1.
\]
Then
\[
\frac{d}{dt}
\tr\bigl(\arctan M_{t}\bigr)
=
\tr\!\left[
(I+M_{t}^{2})^{-1}
(\widehat{Y}-\widehat{X})
\right]
\geq 0.
\]
Hence
\[
F_{\Theta}(p,Y)\geq F_{\Theta}(p,X).
\]
The proof for $\widetilde{F}_{\Theta}$ is identical, with
$P(p)^{-1/2}$ replaced by $P(p)^{1/2}$.
\end{proof}

Because $F_{\Theta}$ is nondecreasing in the Hessian variable, we
use the following viscosity convention.

\begin{definition}
\label{def:viscosity-solution}
Let $u\in C^{0}(\Omega)$.

The function $u$ is a viscosity subsolution of the SLCE if, whenever
$\varphi\in C^{2}(\Omega)$ touches $u$ from above at
$x_{0}\in\Omega$, one has
\[
F_{\Theta}
\bigl(
D\varphi(x_{0}),
D^{2}\varphi(x_{0})
\bigr)
\geq 0.
\]

The function $u$ is a viscosity supersolution of the SLCE if,
whenever $\varphi\in C^{2}(\Omega)$ touches $u$ from below at
$x_{0}\in\Omega$, one has
\[
F_{\Theta}
\bigl(
D\varphi(x_{0}),
D^{2}\varphi(x_{0})
\bigr)
\leq 0.
\]

A viscosity solution is both a viscosity subsolution and a viscosity
supersolution.
\end{definition}

\begin{remark}\label{rem:viscosity-sign-convention}
The directions of the inequalities in
Definition~\ref{def:viscosity-solution} reflect the fact that
$F_{\Theta}$ is nondecreasing, rather than nonincreasing, in its
Hessian variable. If no $C^{2}$ test function touches from the
relevant side at a given point, then the corresponding viscosity
condition is vacuous.
\end{remark}

\subsection{Legendre duality in dimension three}
\label{subsec:Legendre-duality}

Let $w\in C^{2}(U)$, where $U\subset\R^{3}$, and suppose
that its gradient map is locally one-to-one. The local Legendre
transform of $w$ is defined by
\[
u(y)=x\cdot y-w(x),
\qquad
y=Dw(x).
\]
Whenever $D^{2}w(x)$ is nonsingular, one has
\begin{equation}
\label{eq:Legendre-derivatives}
Du(y)=x,
\qquad
D^{2}u(y)=\bigl(D^{2}w(x)\bigr)^{-1}.
\end{equation}

For a function $\psi\in C^{2}(U)$, define
\[
\widetilde{S}[\psi](x)
:=
P(x)^{1/2}D^{2}\psi(x)\,P(x)^{1/2}.
\]
Thus,
\[
\widetilde{F}_{\Theta}(x,D^{2}\psi)
=
\tr
\bigl(
\arctan\widetilde{S}[\psi](x)
\bigr)
-\Theta.
\]

\begin{lemma}
\label{lem:Legendre-transform-identity}
Let $w$ be of class $C^{2}$ in a neighborhood of a point
$x\in\R^{3}$. Assume that $D^{2}w(x)$ is nonsingular and
has exactly two positive eigenvalues and one negative eigenvalue.
Suppose also that $Dw$ is locally one-to-one near $x$, and let
$u=w^{*}$ be the local Legendre transform near
\[
y=Dw(x).
\]
Then
\begin{equation}
\label{eq:Legendre-phase-identity}
\tr
\bigl(
\arctan S[u](y)
\bigr)
=
\frac{\pi}{2}
-
\tr
\bigl(
\arctan\widetilde{S}[w](x)
\bigr).
\end{equation}
Consequently, if
\[
\widetilde{F}_{\Theta^{*}}
\bigl(
x,D^{2}w(x)
\bigr)
=0
\]
and
\[
\Theta=\frac{\pi}{2}-\Theta^{*},
\]
then $u$ is a classical solution of the SLCE with phase $\Theta$
at $y$.
\end{lemma}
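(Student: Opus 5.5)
The plan is to reduce \eqref{eq:Legendre-phase-identity} to a pointwise matrix identity: the symmetric matrix $S[u](y)$ is the inverse of $\widetilde{S}[w](x)$. After that, the claim follows from the scalar identity $\arctan(1/\mu)=\sgn(\mu)\frac{\pi}{2}-\arctan\mu$ for $\mu\neq0$, together with an eigenvalue-sign count.

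\textbf{Step 1: regularity of the Legendre transform.} Since $D^2w(x)$ is nonsingular, the inverse function theorem makes $Dw$ a $C^1$ diffeomorphism from a neighborhood of $x$ onto a neighborhood of $y=Dw(x)$. Hence the local Legendre transform $u$ is $C^2$ near $y$, and \eqref{eq:Legendre-derivatives} holds: $Du(y)=x$ and $D^2u(y)=(D^2w(x))^{-1}$. In particular, $S[u](y)$ is defined.

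\textbf{Step 2: the matrix identity.} Substituting $p=Du(y)=x$ into the definition of $S[u]$ gives
\[
S[u](y)=P(x)^{-1/2}\bigl(D^2w(x)\bigr)^{-1}P(x)^{-1/2}
=\Bigl(P(x)^{1/2}D^2w(x)P(x)^{1/2}\Bigr)^{-1}
=\widetilde{S}[w](x)^{-1}.
\]
This uses only that $P(x)^{1/2}$ is symmetric positive definite and invertible. The key point is that the $P$-argument for $u$ at $y$ is exactly the base point $x$ of $w$; this is why $\widetilde{F}$ carries $P(x)^{1/2}$ rather than $P(Dw)^{1/2}$.

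\textbf{Step 3: spectral computation.} By Sylvester's law of inertia, $\widetilde{S}[w](x)$ is congruent to $D^2w(x)$. It therefore has eigenvalues $\mu_1,\mu_2>0>\mu_3$, and $S[u](y)$ has eigenvalues $1/\mu_i$. Using $\arctan(1/\mu)=\sgn(\mu)\frac{\pi}{2}-\arctan\mu$, we get
\[
\tr\arctan S[u](y)=\Bigl(\tfrac{\pi}{2}+\tfrac{\pi}{2}-\tfrac{\pi}{2}\Bigr)-\sum_{i=1}^3\arctan\mu_i=\frac{\pi}{2}-\tr\arctan\widetilde{S}[w](x).
\]

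\textbf{Consequence.} If $\widetilde{F}_{\Theta^*}(x,D^2w(x))=0$, then $\tr\arctan\widetilde{S}[w](x)=\Theta^*$. The identity then gives $\tr\arctan S[u](y)=\pi/2-\Theta^*=\Theta$, which is the graphical SLCE \eqref{eq:graphical-SLCE} at $y$. Since $u$ is $C^2$ there, $u$ is a classical solution at $y$.

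There is no real obstacle here. The only points needing care are the signature bookkeeping, which is why the hypothesis of exactly two positive eigenvalues and one negative eigenvalue is needed, and the correct identification of the gradient argument $p=x$ in $P$.
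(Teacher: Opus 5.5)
Your proposal is correct and follows the paper's proof essentially step for step. Both arguments use three ingredients: the identity $S[u](y)=\widetilde{S}[w](x)^{-1}$ obtained from the Legendre derivative formulas with $p=Du(y)=x$, the fact that congruence by $P(x)^{1/2}$ preserves the inertia $(+,+,-)$, and the reciprocity $\arctan(\mu^{-1})=\sgn(\mu)\frac{\pi}{2}-\arctan\mu$ summed over the three eigenvalues. Your Step 1 adds a brief inverse-function-theorem justification that $u$ is $C^2$ near $y$, which the paper takes for granted.
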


\begin{proof}
By \eqref{eq:Legendre-derivatives},
\[
Du(y)=x,
\qquad
D^{2}u(y)=\bigl(D^{2}w(x)\bigr)^{-1}.
\]
Therefore,
\[
\begin{aligned}
S[u](y)
&=
P(x)^{-1/2}
\bigl(D^{2}w(x)\bigr)^{-1}
P(x)^{-1/2}
\\
&=
\left(
P(x)^{1/2}
D^{2}w(x)
P(x)^{1/2}
\right)^{-1}
\\
&=
\widetilde{S}[w](x)^{-1}.
\end{aligned}
\]
Let $\mu_{1},\mu_{2},\mu_{3}$ be the eigenvalues of
$\widetilde{S}[w](x)$. Since congruence by the positive definite
matrix $P(x)^{1/2}$ preserves inertia, exactly two of the
$\mu_{i}$ are positive and one is negative. For every
$\mu\neq 0$,
\[
\arctan(\mu^{-1})
=
\sgn(\mu)\frac{\pi}{2}
-\arctan\mu.
\]
Summing over the three eigenvalues gives
\[
\sum_{i=1}^{3}\arctan(\mu_{i}^{-1})
=
\frac{\pi}{2}
-
\sum_{i=1}^{3}\arctan\mu_{i},
\]
which proves \eqref{eq:Legendre-phase-identity}.
\end{proof}

\subsection{The two-dimensional equation}
\label{subsec:two-dimensional-equation}

In the two-dimensional constructions, local graphs are written as
\[
x=u(y,z)
\]
in the ambient coordinates $(x,y,z)\in\R^{3}$ and are oriented by
\begin{equation}
\label{eq:two-dimensional-graph-normal}
\nu_{u}
=
\frac{(1,-u_{y},-u_{z})}
{\sqrt{1+u_{y}^{2}+u_{z}^{2}}}.
\end{equation}
The induced metric and second fundamental form are
\[
g
=
\begin{pmatrix}
1+u_{y}^{2} & u_{y}u_{z} \\
u_{y}u_{z} & 1+u_{z}^{2}
\end{pmatrix},
\qquad
h
=
\frac{1}{\sqrt{1+|Du|^{2}}}
\begin{pmatrix}
u_{yy} & u_{yz} \\
u_{yz} & u_{zz}
\end{pmatrix}.
\]
In particular, the normal curvature in the coordinate direction
$\partial_{y}$ is
\begin{equation}
\label{eq:y-normal-curvature}
k_{y}
=
\frac{u_{yy}}
{\sqrt{1+|Du|^{2}}\,(1+u_{y}^{2})}.
\end{equation}
If $\kappa_{1}\leq\kappa_{2}$ are the principal curvatures, then
\begin{equation}
\label{eq:normal-curvature-comparison}
\kappa_{1}\leq k_{y}\leq\kappa_{2}.
\end{equation}

For a surface in $\R^{3}$, write
\[
H:=\kappa_{1}+\kappa_{2},
\qquad
K:=\kappa_{1}\kappa_{2}.
\]
For
\[
0<\Theta<\frac{\pi}{2},
\qquad
\tau:=\cot\Theta>0,
\]
the relevant branch of the two-dimensional SLCE is equivalently
\[
K+\tau H=1.
\]
We record the branch condition explicitly.

\begin{lemma}
\label{lem:two-dimensional-equivalence}
Let $0<\Theta<\pi/2$ and $\tau=\cot\Theta$. Under
\begin{equation}
\label{eq:two-dimensional-admissible-branch}
\kappa_{1}+\tau>0,
\qquad
\kappa_{2}+\tau>0,
\end{equation}
one has
\[
\arctan\kappa_{1}+\arctan\kappa_{2}=\Theta
\]
if and only if
\[
K+\tau H=1.
\]
\end{lemma}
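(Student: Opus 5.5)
The plan is to reduce the arctangent equation to an algebraic identity through the tangent addition formula, and then use the branch condition \eqref{eq:two-dimensional-admissible-branch} to rule out the spurious solutions that differ by a multiple of $\pi$. Set $\theta_i:=\arctan\kappa_i\in(-\pi/2,\pi/2)$. The branch condition $\kappa_i>-\tau=-\cot\Theta$ is equivalent to $\theta_i>\Theta-\pi/2$, because $\arctan(-\cot\Theta)=\Theta-\pi/2$ when $0<\Theta<\pi/2$. Hence under \eqref{eq:two-dimensional-admissible-branch} the sum $s:=\theta_1+\theta_2$ lies in the open interval
\[
(2\Theta-\pi,\;\pi).
\]
This interval has length $2(\pi-\Theta)<2\pi$ and contains $\Theta$.

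For the algebraic identity, I would use the factorization already noted in the introduction:
\[
K+\tau H=1 \iff (\kappa_1+\tau)(\kappa_2+\tau)=1+\tau^2.
\]
Write $\kappa_i=\tan\theta_i$ and $\tau=\cot\Theta$. Then
\[
\kappa_i+\tau=\frac{\sin\theta_i\sin\Theta+\cos\theta_i\cos\Theta}{\cos\theta_i\sin\Theta}=\frac{\cos(\theta_i-\Theta)}{\cos\theta_i\sin\Theta},
\]
and $1+\tau^2=1/\sin^2\Theta$. The equation $K+\tau H=1$ therefore becomes
\[
\cos(\theta_1-\Theta)\cos(\theta_2-\Theta)=\cos\theta_1\cos\theta_2 .
\]
Apply product-to-sum to both sides; the $\cos(\theta_1-\theta_2)$ terms cancel, leaving
\[
\cos(s-2\Theta)=\cos s .
\]
This holds if and only if $\sin(s-\Theta)\sin\Theta=0$, i.e.\ $\sin(s-\Theta)=0$, i.e.\ $s\in\Theta+\pi\mathbb{Z}$. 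As a cross-check, one can equivalently use $\cos\Theta\,H-\sin\Theta(1-K)=0$, which is $\sin(s-\Theta)=0$ after multiplying by $\cos\theta_1\cos\theta_2>0$.

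Now combine the two steps. The candidates for $s$ are $\Theta$, $\Theta+\pi$, and $\Theta-\pi$. We have $\Theta+\pi>\pi$, and $\Theta-\pi\le 2\Theta-\pi$ because $\Theta>0$, so both lie outside $(2\Theta-\pi,\pi)$. The only admissible value is therefore $s=\Theta$, which proves that $K+\tau H=1$ implies $\arctan\kappa_1+\arctan\kappa_2=\Theta$. The converse is immediate, since $s=\Theta$ gives $\sin(s-\Theta)=0$. The converse does not even need the branch condition, although the branch condition holds automatically there. The only delicate point is the endpoint bookkeeping in this last step, namely checking that the interval excludes $\Theta\pm\pi$ strictly; this is exactly where the strict inequalities in \eqref{eq:two-dimensional-admissible-branch} are used.
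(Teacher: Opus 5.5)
Your proof is correct, but it takes a different route from the paper's.

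\textbf{The paper's route.} The paper never passes to angles. From $(\kappa_1+\tau)(\kappa_2+\tau)=1+\tau^2$ with both factors positive, it reads off $H>0$. (This is implicitly AM--GM: the sum of the two factors is at least $2\sqrt{1+\tau^2}>2\tau$.) It then gets $1-K=\tau H>0$. These two signs place $\arctan\kappa_1+\arctan\kappa_2$ in $(0,\pi/2)$, where $\tan$ is injective, and $\tan$ of the sum is $H/(1-K)=1/\tau=\tan\Theta$. The forward direction is just the tangent addition formula.

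\textbf{Your route.} You convert $K+\tau H=1$ into the exact identity $\sin(s-\Theta)=0$, equivalently by multiplying $\cos\Theta\,H-\sin\Theta(1-K)=0$ by $\cos\theta_1\cos\theta_2>0$. This never divides by $1-K$, so it handles both directions at once. You then use the branch condition only as the angle bound $\theta_i>\Theta-\pi/2$, to discard the spurious root $s=\Theta-\pi$.

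\textbf{Comparison.} The paper's argument is shorter and records the by-products $H>0$ and $K<1$ on the admissible branch. Yours makes transparent exactly what the branch condition does: the discarded root $s=\Theta-\pi$ would force $\theta_i<\Theta-\pi/2$, i.e.\ $\kappa_i<-\tau$, for both $i$.

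\textbf{A correction to your closing remark.} The exclusion of $\Theta-\pi$ rests on $\Theta>0$, not on the strictness of the inequalities. With $\kappa_i\ge-\tau$ you would still have $s\ge 2\Theta-\pi>\Theta-\pi$, so the same argument goes through.
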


\begin{proof}
The tangent addition formula gives the forward implication. Conversely,
$K+\tau H=1$ and \eqref{eq:two-dimensional-admissible-branch} imply
\[
(\kappa_{1}+\tau)(\kappa_{2}+\tau)=1+\tau^{2},
\]
so $H>0$ and $1-K=\tau H>0$. Hence
\[
\widetilde\Theta
:=
\arctan\kappa_{1}+\arctan\kappa_{2}
\in\left(0,\frac{\pi}{2}\right),
\qquad
\tan\widetilde\Theta
=
\frac{H}{1-K}
=
\frac1\tau
=
\tan\Theta,
\]
and therefore $\widetilde\Theta=\Theta$.
\end{proof}

\section{A two-dimensional Lipschitz viscosity solution}
\label{sec:two-dimensional-Lipschitz}

We prove Theorem~\ref{thm:two-dimensional-Lipschitz} by choosing a
rotational surface of constant Gauss curvature whose meridian focal
factor vanishes simply at one point.  We retain the admissible
post-focal side of its signed parallel surface, write it as a one-sided
graph, and reflect it across the focal line.

\begin{proof}[Proof of
Theorem~\ref{thm:two-dimensional-Lipschitz}]
Fix
\[
0<\Theta<\frac{\pi}{2},
\qquad
\alpha>0,
\qquad
\tau:=\cot\Theta,
\qquad
K_{0}:=\frac{1}{1+\tau^{2}}.
\]
For every function $f=f(s)$ introduced below, write $f_{0}:=f(0)$.
We shrink $s_{0}>0$ whenever necessary, without changing the notation;
a simultaneous choice is given in
Remark~\ref{rem:two-dimensional-Lipschitz-bookkeeping}.

Define
\[
r(s)
:=
\frac{1}{\tau K_{0}\sqrt{1+\alpha^{2}}}
\cos\bigl(\sqrt{K_{0}}s\bigr)
+
\frac{\alpha}{\sqrt{K_{0}}\sqrt{1+\alpha^{2}}}
\sin\bigl(\sqrt{K_{0}}s\bigr).
\]
Then $r''+K_{0}r=0$ and
\[
r_{0}
=
\frac{1}{\tau K_{0}\sqrt{1+\alpha^{2}}},
\qquad
r'_{0}
=
\frac{\alpha}{\sqrt{1+\alpha^{2}}},
\qquad
r''_{0}
=
-\frac{1}{\tau\sqrt{1+\alpha^{2}}}.
\]
Since $r$ is a trigonometric polynomial, there are constants
$C_{1},C_{2}>0$ such that
\[
|r'|\leq C_{1},
\qquad
|r''|\leq C_{2}.
\]
After shrinking $s_{0}$,
\[
\frac{r_{0}}{2}\leq r\leq\frac{3r_{0}}{2},
\qquad
\frac{r'_{0}}{2}\leq r'\leq\frac{1+r'_{0}}{2}<1
\qquad (|s|<s_{0}).
\]
Set
\[
\zeta(s)
:=-\int_{0}^{s}\sqrt{1-r'(t)^{2}}\,dt,
\qquad
\zeta'=-\sqrt{1-r'^{2}}.
\]
The upper bound for $r'$ implies
\[
-\zeta'
\geq
\sqrt{1-\left(\frac{1+r'_{0}}{2}\right)^{2}}
\geq
\frac{1}{2\sqrt{1+\alpha^{2}}}.
\]
Moreover,
\[
\zeta_{0}=0,
\qquad
\zeta'_{0}=-\frac{1}{\sqrt{1+\alpha^{2}}},
\qquad
r'^{2}+\zeta'^{2}=1,
\]
and
\[
\zeta''
=-\frac{r'r''}{\zeta'}
=
\frac{K_{0}rr'}{\zeta'}.
\]
Thus $|\zeta''|\leq C_{3}$.  Shrinking $s_{0}$ again gives
\begin{equation}
\label{eq:3-basic-bounds}
\frac{r_{0}}{2}\leq r\leq\frac{3r_{0}}{2},
\qquad
\frac{r'_{0}}{2}\leq r'\leq\frac{1+r'_{0}}{2},
\qquad
\frac{1}{2\sqrt{1+\alpha^{2}}}
\leq-\zeta'
\leq
\frac{3}{2\sqrt{1+\alpha^{2}}}.
\end{equation}

Consider
\[
N(s,\theta)
=
\bigl(r(s)\cos\theta,r(s)\sin\theta,\zeta(s)\bigr),
\qquad
\nu=-\zeta'e_{r}+r'e_{z}.
\]
A direct computation gives the principal curvatures
\[
\lambda_{s}
=-\frac{r''}{\zeta'}
=
\frac{K_{0}r}{\zeta'},
\qquad
\lambda_{\theta}
=
\frac{\zeta'}{r},
\]
so $\lambda_{s}\lambda_{\theta}=K_{0}$.

For the signed parallel surface $M:=N-\tau\nu$, set
\[
J_{s}
:=1+\tau\lambda_{s}
=1+\tau\frac{K_{0}r}{\zeta'},
\qquad
J_{\theta}
:=1+\tau\lambda_{\theta}
=1+\tau\frac{\zeta'}{r}.
\]
At $s=0$,
\[
(\lambda_{s})_{0}=-\frac1\tau,
\qquad
(J_{s})_{0}=0,
\qquad
(J_{\theta})_{0}=1-\tau^{2}K_{0}=K_{0}.
\]
The conserved quantity for $r''+K_{0}r=0$ gives
\[
(r')^{2}+K_{0}r^{2}
\equiv
1+\frac{1}{\tau^{2}(1+\alpha^{2})},
\qquad
(\zeta')^{2}-K_{0}r^{2}
=
-\frac{1}{\tau^{2}(1+\alpha^{2})}.
\]
Consequently,
\[
J_{s}'
=
-\frac{K_{0}r'}
{\tau(1+\alpha^{2})(\zeta')^{3}},
\qquad
J_{s}''
=
\frac{K_{0}^{2}r\bigl(1+2(r')^{2}\bigr)}
{\tau(1+\alpha^{2})(\zeta')^{5}}.
\]
In particular,
\[
(J_{s}')_{0}=\frac{\alpha K_{0}}{\tau},
\qquad
(J_{s}'')_{0}
=-\frac{K_{0}(1+3\alpha^{2})}{\tau^{2}}.
\]
The bounds in \eqref{eq:3-basic-bounds} give
$|J_{s}''|\leq C_{4}$.  Hence, after another shrinking,
\[
\frac{\alpha K_{0}}{2\tau}
\leq J_{s}'
\leq
\frac{3\alpha K_{0}}{2\tau},
\]
and therefore
\begin{equation}
\label{eq:3-Js-linear-bounds}
\frac{\alpha K_{0}}{2\tau}(-s)
\leq-J_{s}(s)
\leq
\frac{3\alpha K_{0}}{2\tau}(-s),
\qquad -s_{0}<s<0.
\end{equation}
In particular, $J_{s}<0$ on $(-s_{0},0)$.  The same conserved
quantity also gives
\[
J_{\theta}'
=
\frac{r'}
{\tau(1+\alpha^{2})\zeta' r^{2}},
\]
so $|J_{\theta}'|\leq C_{5}$.  Shrinking $s_{0}$ once more,
\[
\frac{K_{0}}{2}
\leq J_{\theta}\leq
\frac{3K_{0}}{2}.
\]
Thus $M$ is regular on $(-s_{0},0)\times\Sph^{1}$.

Direct differentiation in the two principal directions gives
\[
\kappa_{s}=\frac{\lambda_{s}}{J_{s}},
\qquad
\kappa_{\theta}=\frac{\lambda_{\theta}}{J_{\theta}}.
\]
Since $\lambda_{s}\lambda_{\theta}=K_{0}=(1+\tau^{2})^{-1}$,
direct substitution yields
\[
K_{M}+\tau H_{M}=1.
\]
Furthermore,
\[
\kappa_{s}+\tau
=
\frac{(1+\tau^{2})J_{s}-1}{\tau J_{s}}>0,
\qquad
\kappa_{\theta}+\tau
=
\frac{\tau+\lambda_{s}^{-1}}{J_{\theta}}>0.
\]
Indeed, $J_{s}<0$ implies $\lambda_{s}<-1/\tau$, hence
$\lambda_{s}^{-1}>-\tau$.  Lemma~\ref{lem:two-dimensional-equivalence}
therefore gives
\[
\arctan\kappa_{s}+\arctan\kappa_{\theta}=\Theta
\]
on the selected post-focal branch.

\begin{figure}[t]
\centering
\begin{tikzpicture}
\begin{groupplot}[
  group style={group size=2 by 1, horizontal sep=1.05cm},
  width=0.425\textwidth,
  height=0.285\textwidth,
  slce axis
]

\nextgroupplot[
  title={Normal offset in a meridian plane},
  xlabel={$R$}, ylabel={$Z$},
  xmin=0.24, xmax=1.73,
  ymin=-0.64, ymax=0.49
]
\addplot[SLCESeed, line width=1.05pt, domain=-1:1, samples=150]
  ({figrhoN(x)},{figZN(x)});
\addplot[SLCEPost, line width=1.2pt, domain=-1:0, samples=100]
  ({figrhoM(x)},{figZM(x)});
\addplot[SLCEOther, line width=1.0pt, dashed, domain=0:1, samples=100]
  ({figrhoM(x)},{figZM(x)});

\pgfplotsinvokeforeach{-0.72,-0.28,0.34,0.76}{%
  \draw[slce arrow, black!55]
    (axis cs:{figrhoN(#1)},{figZN(#1)}) --
    (axis cs:{figrhoM(#1)},{figZM(#1)});
}
\pgfmathsetmacro{\figrP}{figrhoN(0)}
\pgfmathsetmacro{\figzP}{figZN(0)}
\pgfmathsetmacro{\figrQ}{figrhoM(0)}
\pgfmathsetmacro{\figzQ}{figZM(0)}
\node[slce point] at (axis cs:\figrP,\figzP) {};
\node[slce point] at (axis cs:\figrQ,\figzQ) {};
\node[slce label, anchor=west] at (axis cs:1.37,0.02) {$p$};
\node[slce label, anchor=north east] at (axis cs:0.43,-0.41) {$q$};
\node[slce label, text=SLCESeed, anchor=west]
  at (axis cs:1.42,-0.30) {$N$};
\node[slce label, text=SLCEPost, anchor=south west]
  at (axis cs:0.27,-0.14) {$M$, $s<0$};
\node[slce label, text=SLCEOther, anchor=north west]
  at (axis cs:0.53,-0.50) {$M$, $s>0$};

\nextgroupplot[
  title={Local branches at the focal point},
  xlabel={$x$}, ylabel={$z$},
  xmin=-0.005, xmax=0.44,
  ymin=-0.005, ymax=0.355
]
\addplot[SLCEPost, line width=1.2pt, domain=0:\figLocalZmax, samples=180]
  ({figxPost(x)},{x});
\addplot[SLCEOther, line width=1.0pt, dashed, domain=0:\figLocalZmax, samples=180]
  ({figxOther(x)},{x});
\addplot[black!55, line width=0.75pt, densely dotted,
  domain=0:\figLocalZmax, samples=2]
  ({\figAlpha*x},{x});
\node[slce point] at (axis cs:0,0) {};
\node[slce label, text=SLCEPost, anchor=west]
  at (axis cs:0.28,0.235) {$s<0$};
\node[slce label, text=SLCEOther, anchor=east]
  at (axis cs:0.105,0.285) {$s>0$};
\node[slce label, anchor=west]
  at (axis cs:0.16,0.135) {$x=\alpha z$};
\node[slce label, anchor=west] at (axis cs:0.022,0.060)
  {$\kappa_s\to+\infty$};
\draw[slce arrow]
  (axis cs:0.063,0.054) -- (axis cs:{figxPost(0.019)},0.019);

\end{groupplot}
\end{tikzpicture}
\caption{Geometry of the two-dimensional post-focal construction.
The left panel is a schematic meridian-plane picture of the normal
shift $M=N-\tau\nu$.  The right panel displays the two local branches
with their common tangent.  The solid branch is the admissible
post-focal branch retained in the proof; the curves are drawn from the
local asymptotic model and are not to scale.}
\label{fig:postfocal-geometry}
\end{figure}

Write
\[
M(s,\theta)
=
\bigl(R(s)\cos\theta,R(s)\sin\theta,Z(s)\bigr),
\]
where
\[
R:=r+\tau\zeta',
\qquad
Z:=\zeta-\tau r'.
\]
At the focal point,
\[
R_{0}
=
\frac{1}{\tau\sqrt{1+\alpha^{2}}}>0,
\qquad
Z_{0}
=
-\frac{\tau\alpha}{\sqrt{1+\alpha^{2}}}.
\]
Using $r''=-K_{0}r$ and
$\zeta''=K_{0}rr'/\zeta'$, we obtain
\begin{equation}
\label{eq:3-RZ-prime}
R'=r'J_{s},
\qquad
Z'=\zeta'J_{s}.
\end{equation}
The first identity gives $|R'|\leq C_{6}$.  After shrinking $s_{0}$,
\begin{equation}
\label{eq:3-R-positive}
\frac{R_{0}}{2}\leq R\leq\frac{3R_{0}}{2}.
\end{equation}
Also,
\[
\left(-\frac{r'}{\zeta'}\right)_{0}=\alpha,
\qquad
\left(-\frac{r'}{\zeta'}\right)'
=
\frac{K_{0}r}{(\zeta')^{3}}.
\]
The derivative is bounded by a constant $C_{7}$, so a final shrinking
gives
\[
\frac{\alpha}{2}
\leq-\frac{r'}{\zeta'}
\leq\frac{3\alpha}{2}.
\]

Center the coordinates at the focal point by
\[
x:=R(s)\cos\theta-R_{0},
\qquad
y:=R(s)\sin\theta,
\qquad
z:=Z_{0}-Z(s).
\]
For $-s_{0}<s<0$, \eqref{eq:3-RZ-prime},
\eqref{eq:3-basic-bounds}, and
\eqref{eq:3-Js-linear-bounds} give
\[
C_{8}(-s)
\leq
-\frac{d}{ds}\bigl(Z_{0}-Z(s)\bigr)
\leq
C_{9}(-s),
\]
where one may take
\[
C_{8}
:=
\frac{\alpha K_{0}}
{4\tau\sqrt{1+\alpha^{2}}},
\qquad
C_{9}
:=
\frac{9\alpha K_{0}}
{4\tau\sqrt{1+\alpha^{2}}}.
\]
Thus
\begin{equation}
\label{eq:3-z-quadratic-bounds}
\frac{C_{8}}{2}s^{2}
\leq Z_{0}-Z(s)
\leq\frac{C_{9}}{2}s^{2},
\qquad -s_{0}<s<0.
\end{equation}
Set
\[
z_{0}:=\frac{C_{8}}{4}s_{0}^{2}.
\]
Then $s\mapsto Z_{0}-Z(s)$ has an inverse $s=s(z)<0$ for
$0<z<z_{0}$, and \eqref{eq:3-z-quadratic-bounds} yields
\[
|s(z)|\leq C_{10}z^{1/2},
\qquad
C_{10}:=\left(\frac{2}{C_{8}}\right)^{1/2}.
\]
Writing $R(z):=R(s(z))$, the identities in
\eqref{eq:3-RZ-prime} give
\[
R'(z)
=-\frac{r'(s(z))}{\zeta'(s(z))}.
\]
Hence
\[
|R'(z)-\alpha|
\leq C_{11}z^{1/2},
\qquad
C_{11}:=C_{7}C_{10},
\]
and therefore
\begin{equation}
\label{eq:3-R-expansion}
R(z)=R_{0}+\alpha z+O(z^{3/2})
\qquad (z\downarrow0).
\end{equation}

Let $y_{0}:=R_{0}/4$.  By \eqref{eq:3-R-positive},
\[
R(z)^{2}-y^{2}
\geq
\frac{3R_{0}^{2}}{16}>0
\]
for $|y|<y_{0}$ and $0<z<z_{0}$.  The selected branch is therefore
\[
x=u_{+}(y,z)
:=
\sqrt{R(z)^{2}-y^{2}}-R_{0}.
\]
Since $|y|/R(z)\leq1/2$, its inherited normal has positive
$x$-component and agrees with \eqref{eq:two-dimensional-graph-normal}.
Using \eqref{eq:3-R-expansion},
\[
u_{+}(y,z)
=
\alpha z
-
\frac{\tau\sqrt{1+\alpha^{2}}}{2}y^{2}
+
O\bigl(z^{3/2}+y^{2}z+y^{4}\bigr).
\]

\begin{figure}[t]
\centering
\begin{tikzpicture}
\begin{groupplot}[
  group style={group size=2 by 1, horizontal sep=1.15cm},
  width=0.405\textwidth,
  height=0.265\textwidth,
  slce axis
]

\nextgroupplot[
  title={The selected branch as a graph},
  xlabel={$z$}, ylabel={$x$},
  xmin=-0.015, xmax=\figGraphZmax+0.015,
  ymin=-0.02, ymax=0.57
]
\addplot[SLCEPost, line width=1.25pt,
  domain=0:\figGraphZmax, samples=200]
  {\figAlpha*x + \figCusp*x^(1.5)};
\addplot[black!55, line width=0.75pt, densely dotted,
  domain=0:\figGraphZmax, samples=2]
  {\figAlpha*x};
\node[slce point] at (axis cs:0,0) {};
\node[slce label, text=SLCEPost, anchor=west]
  at (axis cs:0.18,0.38) {$x=x_{+}(z)$};
\draw[slce arrow, SLCEPost]
  (axis cs:0.185,0.365) --
  (axis cs:0.19,{\figAlpha*0.19+\figCusp*0.19^(1.5)});
\node[slce label, anchor=west] at (axis cs:0.025,0.515)
  {$x_{+}(z)=\alpha z+O(z^{3/2})$};

\nextgroupplot[
  title={Even reflection across $z=0$},
  xlabel={$z$}, ylabel={$x$},
  xmin=-\figGraphZmax-0.015, xmax=\figGraphZmax+0.015,
  ymin=-0.02, ymax=0.57
]
\addplot[SLCESingular, line width=1.25pt,
  domain=-\figGraphZmax:\figGraphZmax, samples=260]
  {\figAlpha*abs(x) + \figCusp*(abs(x))^(1.5)};
\addplot[black!55, line width=0.65pt, densely dashed]
  coordinates {(0,-0.02) (0,0.57)};
\node[slce point] at (axis cs:0,0) {};
\node[slce label, text=SLCESingular, anchor=west]
  at (axis cs:0.12,0.43) {$x(z)=x_{+}(|z|)$};
\node[slce label, anchor=east] at (axis cs:-0.055,0.125)
  {$x_z(0^-)=-\alpha$};
\node[slce label, anchor=west] at (axis cs:0.055,0.125)
  {$x_z(0^+)=\alpha$};

\end{groupplot}
\end{tikzpicture}
\caption{Formation of the Lipschitz corner.  The selected post-focal
branch is first written as a one-sided graph and then reflected across
$z=0$.  The common tangent becomes the two distinct one-sided
slopes $\pm\alpha$.}
\label{fig:lipschitz-reflection}
\end{figure}

On
\[
Q:=(-y_{0},y_{0})\times(-z_{0},z_{0}),
\]
define
\[
u(y,z):=u_{+}(y,|z|).
\]
The formulas
\[
(u_{+})_{y}
=-\frac{y}{\sqrt{R(z)^{2}-y^{2}}},
\qquad
(u_{+})_{z}
=
\frac{R'(z)R(z)}{\sqrt{R(z)^{2}-y^{2}}}
\]
show that $u\in C^{0,1}(Q)$.  Moreover,
\begin{equation}
\label{eq:3-one-sided-z-derivatives}
u_{z}(y,0^+)
=
\frac{\alpha R_{0}}{\sqrt{R_{0}^{2}-y^{2}}},
\qquad
u_{z}(y,0^-)
=
-\frac{\alpha R_{0}}{\sqrt{R_{0}^{2}-y^{2}}},
\end{equation}
so $u\notin C^{1}(Q)$.  Reflection across $z=0$ is a Euclidean
isometry preserving the chosen graph orientation; hence $u$ is a
classical solution in $Q\setminus\{z=0\}$.  Finally,
\begin{equation}
\label{eq:3-u-final-expansion}
u(y,z)
=
\alpha|z|
-
\frac{\tau\sqrt{1+\alpha^{2}}}{2}y^{2}
+
O\bigl(|z|^{3/2}+y^{2}|z|+y^{4}\bigr).
\end{equation}

\begin{figure}[t]
\centering
\begin{tikzpicture}
\begin{groupplot}[
  group style={group size=2 by 1, horizontal sep=0.85cm},
  width=0.43\textwidth,
  height=0.31\textwidth,
  view={-55}{24},
  xlabel={$y$}, ylabel={$z$}, zlabel={$x$},
  xlabel style={font=\scriptsize},
  ylabel style={font=\scriptsize},
  zlabel style={font=\scriptsize},
  title style={font=\small, align=center},
  tick label style={font=\scriptsize},
  domain=-\figYmax:\figYmax,
  samples=34,
  samples y=26,
  z buffer=sort,
  colormap={slceblue}{
    color(0cm)=(white);
    color(0.55cm)=(SLCEOther!35);
    color(1cm)=(SLCEOther!90!black)
  },
  grid=major,
  major grid style={draw=SLCEGrid, line width=0.22pt},
  axis line style={black!65, line width=0.4pt},
  tick style={black!60},
  clip=false
]

\nextgroupplot[
  title={One-sided graph $x=u_{+}(y,z)$},
  y domain=0:\figSurfaceZmax,
  ymin=0, ymax=\figSurfaceZmax
]
\addplot3[surf, shader=interp, opacity=0.93]
  {sqrt(max((figRPost(y))^2-x^2,0))-\figRzero};
\addplot3[SLCESeed, line width=1.15pt,
  domain=-\figYmax:\figYmax, samples=90]
  (x,0,{sqrt(max(\figRzero*\figRzero-x*x,0))-\figRzero});

\nextgroupplot[
  title={Reflected graph $x=u_{+}(y,|z|)$},
  y domain=-\figSurfaceZmax:\figSurfaceZmax,
  ymin=-\figSurfaceZmax, ymax=\figSurfaceZmax
]
\addplot3[surf, shader=interp, opacity=0.93]
  {sqrt(max((figRPost(abs(y)))^2-x^2,0))-\figRzero};
\addplot3[SLCESingular, line width=1.25pt,
  domain=-\figYmax:\figYmax, samples=90]
  (x,0,{sqrt(max(\figRzero*\figRzero-x*x,0))-\figRzero});

\end{groupplot}
\end{tikzpicture}
\caption{Leading-order local graph geometry.  The left panel is the
smooth one-sided post-focal graph.  After reflection, the two smooth
sheets meet continuously along the red curve $z=0$, where the
normal derivative has a jump.}
\label{fig:lipschitz-surfaces}
\end{figure}

It remains to check the viscosity inequalities on
$\Gamma:=\{(y,0):|y|<y_{0}\}$.  Fix
\[
P_{*}:=(y_{*},0),
\qquad
X_{*}:=\sqrt{R_{0}^{2}-y_{*}^{2}}.
\]
Then
\[
u_{y}(P_{*})=-\frac{y_{*}}{X_{*}},
\qquad
u_{yy}(P_{*})=-\frac{R_{0}^{2}}{X_{*}^{3}},
\]
and \eqref{eq:3-one-sided-z-derivatives} gives
\[
u_{z}(P_{*}^{+})=\frac{\alpha R_{0}}{X_{*}},
\qquad
u_{z}(P_{*}^{-})=-\frac{\alpha R_{0}}{X_{*}}.
\]
If $\varphi\in C^{2}(Q)$ touches $u$ from above at $P_{*}$, restriction
to $y=y_{*}$ gives
\[
u_{z}(P_{*}^{+})
\leq\varphi_{z}(P_{*})
\leq u_{z}(P_{*}^{-}),
\]
a contradiction.  Thus the subsolution condition is vacuous on
$\Gamma$.

If $\varphi$ touches $u$ from below at $P_{*}$, then
\[
\varphi_{y}(P_{*})=-\frac{y_{*}}{X_{*}},
\qquad
\varphi_{yy}(P_{*})\leq-\frac{R_{0}^{2}}{X_{*}^{3}},
\qquad
|\varphi_{z}(P_{*})|\leq\frac{\alpha R_{0}}{X_{*}}.
\]
Consequently,
\[
\sqrt{1+|D\varphi(P_{*})|^{2}}
\leq
\frac{R_{0}\sqrt{1+\alpha^{2}}}{X_{*}}
=
\frac{1}{\tau X_{*}},
\qquad
1+\varphi_{y}(P_{*})^{2}
=
\frac{R_{0}^{2}}{X_{*}^{2}}.
\]
By \eqref{eq:y-normal-curvature}, the normal curvature of the test
graph in the $\partial_{y}$ direction satisfies
\[
k_{y}(P_{*})
\leq
\frac{-R_{0}^{2}/X_{*}^{3}}
{(1/(\tau X_{*}))(R_{0}^{2}/X_{*}^{2})}
=-\tau.
\]
If $k_{1}\leq k_{2}$ are its principal curvatures, then
\eqref{eq:normal-curvature-comparison} gives $k_{1}\leq-\tau$, and
therefore
\[
F_{\Theta}
\bigl(D\varphi(P_{*}),D^{2}\varphi(P_{*})\bigr)
=
\arctan k_{1}+\arctan k_{2}-\Theta
<
\arctan(-\tau)+\frac{\pi}{2}-\Theta
=0.
\]
Thus $u$ is a viscosity supersolution on $\Gamma$, and hence a
viscosity solution in $Q$.  The expansion
\eqref{eq:3-u-final-expansion} and the jump
\eqref{eq:3-one-sided-z-derivatives} finish the proof.
\end{proof}

\begin{remark}[Choice of the constants]
\label{rem:two-dimensional-Lipschitz-bookkeeping}
One nonoptimal explicit choice is
\[
\begin{aligned}
C_{1}
&:=
\frac{1}{\tau\sqrt{K_{0}}\sqrt{1+\alpha^{2}}}
+
\frac{\alpha}{\sqrt{1+\alpha^{2}}},
&
C_{2}
&:=
\frac{1}{\tau\sqrt{1+\alpha^{2}}}
+
\frac{\alpha\sqrt{K_{0}}}{\sqrt{1+\alpha^{2}}},
\\
C_{3}
&:=\frac{3}{\tau},
&
C_{4}
&:=\frac{144K_{0}(1+\alpha^{2})}{\tau^{2}},
\\
C_{5}
&:=8\tau K_{0}^{2}\sqrt{1+\alpha^{2}},
&
C_{6}
&:=4,
\\
C_{7}
&:=\frac{12(1+\alpha^{2})}{\tau},
&
C_{8}
&:=\frac{\alpha K_{0}}
{4\tau\sqrt{1+\alpha^{2}}},
\\
C_{9}
&:=\frac{9\alpha K_{0}}
{4\tau\sqrt{1+\alpha^{2}}},
&
C_{10}
&:=
\left(
\frac{8\tau\sqrt{1+\alpha^{2}}}{\alpha K_{0}}
\right)^{1/2},
\\
C_{11}
&:=
\frac{12(1+\alpha^{2})}{\tau}
\left(
\frac{8\tau\sqrt{1+\alpha^{2}}}{\alpha K_{0}}
\right)^{1/2}.
\end{aligned}
\]
It is enough to take
\[
\begin{aligned}
s_{0}:=\min\Biggl\{
&\frac{1}
{2\tau K_{0}\sqrt{1+\alpha^{2}}\,C_{1}},
\frac{\alpha}
{2\sqrt{1+\alpha^{2}}\,C_{2}},
\frac{1-\alpha/\sqrt{1+\alpha^{2}}}{2C_{2}},
\\
&\frac{1}{2\sqrt{1+\alpha^{2}}\,C_{3}},
\frac{\alpha K_{0}}{2\tau C_{4}},
\frac{K_{0}}{2C_{5}},
\frac{1}{2\tau\sqrt{1+\alpha^{2}}\,C_{6}},
\frac{\alpha}{2C_{7}}
\Biggr\}.
\end{aligned}
\]
These constants follow directly from
\eqref{eq:3-basic-bounds} and the displayed formulas for
$\zeta''$, $J_{s}''$, $J_{\theta}'$, $R'$, and
$(-r'/\zeta')'$.
\end{remark}

\section{Failure of two-dimensional interior curvature estimates}
\label{sec:two-dimensional-C2-failure}

We prove Theorem~\ref{thm:two-dimensional-C2-failure} by choosing a
family of smooth post-focal parallel surfaces for which the meridian
focal factor satisfies $J_{s,\eps}(0)=-\tau\eps$ and converges to a
quadratic zero.  The resulting cubic relation between the meridian
parameter and the graph coordinate gives the uniform
$C^{1,1/3}$ bound and the sharp failure of every higher H\"older
estimate.

\begin{proof}[Proof of
Theorem~\ref{thm:two-dimensional-C2-failure}]
Fix
\[
0<\Theta<\frac{\pi}{2},
\qquad
\tau:=\cot\Theta,
\qquad
K_{0}:=\frac{1}{1+\tau^{2}}.
\]
For every function $f_{\eps}=f_{\eps}(s)$ introduced below, write
$f_{\eps,0}:=f_{\eps}(0)$.  We shrink $s_{0}>0$ whenever necessary,
without changing the notation; one simultaneous choice is given in
Remark~\ref{rem:two-dimensional-C2-bookkeeping}.

For
\[
0\leq\eps\leq\frac{1}{2\tau},
\]
define
\[
r_{\eps}(s)
:=
\frac{1/\tau+\eps}{K_{0}}
\cos\bigl(\sqrt{K_{0}}s\bigr).
\]
Then $r_{\eps}''+K_{0}r_{\eps}=0$ and
\[
r_{\eps,0}
=
\frac{1/\tau+\eps}{K_{0}},
\qquad
r'_{\eps,0}=0,
\qquad
r''_{\eps,0}=-\left(\frac1\tau+\eps\right).
\]
Moreover,
\[
r_{\eps}'''(s)
=
\left(\frac1\tau+\eps\right)
\sqrt{K_{0}}\sin\bigl(\sqrt{K_{0}}s\bigr),
\qquad
|r_{\eps}'''|\leq C_{1},
\qquad
|r_{\eps}''|\leq C_{2}.
\]
After shrinking $s_{0}$,
\[
\frac12\left(\frac1\tau+\eps\right)
\leq-r_{\eps}''
\leq
\frac32\left(\frac1\tau+\eps\right).
\]
Since $r'_{\eps,0}=0$, this gives
\[
sr'_{\eps}(s)\leq0,
\qquad
\frac{|s|}{2\tau}
\leq|r'_{\eps}(s)|
\leq\frac{9|s|}{4\tau}.
\]
Shrinking $s_{0}$ again, Taylor's formula at $s=0$ yields
\begin{equation}
\label{eq:4-basic-bounds}
\frac{r_{\eps,0}}{2}
\leq r_{\eps}\leq
\frac{3r_{\eps,0}}{2},
\qquad
|r'_{\eps}|\leq\frac12.
\end{equation}

Set
\[
\zeta_{\eps}(s)
:=-\int_{0}^{s}\sqrt{1-r'_{\eps}(t)^{2}}\,dt,
\qquad
\zeta'_{\eps}=-\sqrt{1-r_{\eps}'^{\,2}}.
\]
Then
\[
\zeta_{\eps,0}=0,
\qquad
\zeta'_{\eps,0}=-1,
\qquad
r_{\eps}'^{\,2}+\zeta_{\eps}'^{\,2}=1,
\]
and \eqref{eq:4-basic-bounds} gives
\[
\frac12\leq-\zeta'_{\eps}\leq\frac32.
\]
Differentiating the arc-length identity,
\[
\zeta_{\eps}''
=-\frac{r'_{\eps}r''_{\eps}}{\zeta'_{\eps}}
=
\frac{K_{0}r_{\eps}r'_{\eps}}{\zeta'_{\eps}},
\qquad
|\zeta_{\eps}''|\leq C_{2}.
\]

Consider
\[
N_{\eps}(s,\theta)
=
\bigl(r_{\eps}(s)\cos\theta,
      r_{\eps}(s)\sin\theta,
      \zeta_{\eps}(s)\bigr),
\qquad
\nu_{\eps}=-\zeta'_{\eps}e_{r}+r'_{\eps}e_{z}.
\]
A direct computation gives
\[
\lambda_{s,\eps}
=-\frac{r''_{\eps}}{\zeta'_{\eps}}
=
\frac{K_{0}r_{\eps}}{\zeta'_{\eps}},
\qquad
\lambda_{\theta,\eps}
=
\frac{\zeta'_{\eps}}{r_{\eps}},
\qquad
\lambda_{s,\eps}\lambda_{\theta,\eps}=K_{0}.
\]

For the signed parallel surface $M_{\eps}:=N_{\eps}-\tau\nu_{\eps}$,
set
\[
J_{s,\eps}
:=1+\tau\frac{K_{0}r_{\eps}}{\zeta'_{\eps}},
\qquad
J_{\theta,\eps}
:=1+\tau\frac{\zeta'_{\eps}}{r_{\eps}}.
\]
At $s=0$,
\[
J_{s,\eps,0}=-\tau\eps,
\qquad
J_{\theta,\eps,0}
=1-\frac{\tau K_{0}}{1/\tau+\eps}
\geq K_{0}.
\]
The conserved quantity for $r_{\eps}''+K_{0}r_{\eps}=0$ is
\[
r_{\eps}'^{\,2}+K_{0}r_{\eps}^{2}
\equiv
\frac{(1/\tau+\eps)^{2}}{K_{0}},
\qquad
\zeta_{\eps}'^{\,2}-K_{0}r_{\eps}^{2}
=
1-\frac{(1/\tau+\eps)^{2}}{K_{0}}.
\]
Consequently,
\[
J_{s,\eps}'
=
-\tau K_{0}
\left(
\frac{(1/\tau+\eps)^{2}}{K_{0}}-1
\right)
\frac{r'_{\eps}}{(\zeta'_{\eps})^{3}},
\]
\[
J_{s,\eps}''
=
\tau K_{0}^{2}
\left(
\frac{(1/\tau+\eps)^{2}}{K_{0}}-1
\right)
\frac{r_{\eps}\bigl(1+2r_{\eps}'^{\,2}\bigr)}
{(\zeta'_{\eps})^{5}},
\]
and
\[
\begin{aligned}
J_{s,\eps}'''
&=
\tau K_{0}^{2}
\left(
\frac{(1/\tau+\eps)^{2}}{K_{0}}-1
\right)
\frac{r'_{\eps}}{(\zeta'_{\eps})^{7}}
\\
&\quad\times
\left[
1+r_{\eps}'^{\,2}-2r_{\eps}'^{\,4}
-3K_{0}r_{\eps}^{2}
\bigl(3+2r_{\eps}'^{\,2}\bigr)
\right].
\end{aligned}
\]
Thus $J'_{s,\eps,0}=0$ and
\[
J''_{s,\eps,0}
=
-\tau K_{0}
\left(\frac1\tau+\eps\right)
\left(
\frac{(1/\tau+\eps)^{2}}{K_{0}}-1
\right),
\]
so
\[
\frac{K_{0}}{\tau^{2}}
\leq-J''_{s,\eps,0}
\leq\frac{27}{8\tau^{2}}.
\]
The preceding bounds imply $|J_{s,\eps}'''|\leq C_{3}$.  After a
further shrinking,
\[
\frac12\bigl(-J''_{s,\eps,0}\bigr)
\leq-J''_{s,\eps}(s)
\leq
\frac32\bigl(-J''_{s,\eps,0}\bigr).
\]
Integrating twice from $0$ gives
\begin{equation}
\label{eq:4-Js-quadratic-bounds}
\tau\eps+C_{4}s^{2}
\leq-J_{s,\eps}(s)
\leq
\tau\eps+C_{5}s^{2}
\qquad (|s|<s_{0}).
\end{equation}
In particular, $J_{s,\eps}<0$ for $\eps>0$, while
$J_{s,0}(s)<0$ for $s\neq0$.

The same conserved quantity gives
\[
J_{\theta,\eps}'
=
\tau
\left(
\frac{(1/\tau+\eps)^{2}}{K_{0}}-1
\right)
\frac{r'_{\eps}}
{\zeta'_{\eps}r_{\eps}^{2}},
\qquad
|J_{\theta,\eps}'|\leq C_{6}.
\]
Shrinking $s_{0}$ once more,
\[
J_{\theta,\eps}\geq\frac{K_{0}}{2}.
\]
Hence $M_{\eps}$ is regular for $\eps>0$.  In the two principal
directions,
\[
\kappa_{s,\eps}
=
\frac{\lambda_{s,\eps}}{J_{s,\eps}},
\qquad
\kappa_{\theta,\eps}
=
\frac{\lambda_{\theta,\eps}}{J_{\theta,\eps}}.
\]
Since $\lambda_{s,\eps}\lambda_{\theta,\eps}=K_{0}$, direct
substitution gives
\[
K_{M_{\eps}}+\tau H_{M_{\eps}}=1.
\]
Moreover,
\[
\kappa_{s,\eps}+\tau
=
\frac{(1+\tau^{2})J_{s,\eps}-1}
{\tau J_{s,\eps}}>0,
\qquad
\kappa_{\theta,\eps}+\tau
=
\frac{\tau+\lambda_{s,\eps}^{-1}}
{J_{\theta,\eps}}>0.
\]
Indeed, $J_{s,\eps}<0$ implies
$\lambda_{s,\eps}<-1/\tau$ and hence
$\lambda_{s,\eps}^{-1}>-\tau$.  Lemma~\ref{lem:two-dimensional-equivalence}
therefore gives
\[
\arctan\kappa_{s,\eps}
+
\arctan\kappa_{\theta,\eps}
=
\Theta.
\]

\begin{figure}[t]
\centering
\begin{tikzpicture}
\begin{groupplot}[
  group style={group size=3 by 1, horizontal sep=0.62cm},
  width=0.265\textwidth,
  height=0.235\textwidth,
  slce axis,
  title style={font=\scriptsize, align=center}
]

\nextgroupplot[
  title={(a) Focal factor},
  xlabel={$s$}, ylabel={$J_{s,\eps}$},
  xmin=-1.03, xmax=1.03,
  ymin=-1.18, ymax=0.08
]
\addplot[SLCELimit, line width=1.15pt, domain=-1:1, samples=160]
  {-x*x};
\addplot[SLCESmoothA, line width=0.9pt, domain=-1:1, samples=160]
  {-(\figEpsA+x*x)};
\addplot[SLCESmoothB, line width=0.9pt, dashed, domain=-1:1, samples=160]
  {-(\figEpsB+x*x)};
\addplot[SLCESmoothC, line width=0.9pt, dash dot, domain=-1:1, samples=160]
  {-(\figEpsC+x*x)};
\node[slce label, anchor=south east] at (axis cs:0.92,-0.14)
  {$J_{s,0}\asymp-s^2$};
\node[slce label, anchor=north west] at (axis cs:-0.94,-0.38)
  {$\eps\downarrow0$};

\nextgroupplot[
  title={(b) Meridian profiles},
  xlabel={$z$}, ylabel={$x$},
  xmin=-\figLimitZmax, xmax=\figLimitZmax,
  ymin=-0.025, ymax=\figLimitC*1.08
]
\addplot[SLCELimit, line width=1.2pt,
  domain=-\figLimitZmax:\figLimitZmax, samples=220]
  {\figLimitC*(abs(x))^(4/3)};
\addplot[SLCESmoothA, line width=0.9pt,
  domain=-\figLimitZmax:\figLimitZmax, samples=220]
  {\figLimitC*((x*x+(\figEpsA)^3)^(2/3)-(\figEpsA)^2)};
\addplot[SLCESmoothB, line width=0.9pt, dashed,
  domain=-\figLimitZmax:\figLimitZmax, samples=220]
  {\figLimitC*((x*x+(\figEpsB)^3)^(2/3)-(\figEpsB)^2)};
\addplot[SLCESmoothC, line width=0.9pt, dash dot,
  domain=-\figLimitZmax:\figLimitZmax, samples=220]
  {\figLimitC*((x*x+(\figEpsC)^3)^(2/3)-(\figEpsC)^2)};
\node[slce label, anchor=west] at (axis cs:0.08,0.15)
  {$u_{\eps,zz}(0)\asymp\eps^{-1}$};
\node[slce label, anchor=north west] at (axis cs:-0.92,0.48)
  {$\eps\downarrow0$};
\node[slce label, anchor=west] at (axis cs:0.42,0.62)
  {$C|z|^{4/3}$};

\nextgroupplot[
  title={(c) Limit graph},
  xlabel={$y$}, ylabel={$z$}, zlabel={$x$},
  view={-55}{24},
  domain=-\figLimitYmax:\figLimitYmax,
  y domain=-\figLimitZmax:\figLimitZmax,
  samples=25,
  samples y=25,
  z buffer=sort,
  colormap={slcegreen}{
    color(0cm)=(white);
    color(0.55cm)=(SLCESmoothC!35);
    color(1cm)=(SLCESmoothC!90!black)
  }
]
\addplot3[surf, shader=interp, opacity=0.93]
  {\figLimitC*(abs(y))^(4/3)-x*x/(2*\figRhoLimit)};
\addplot3[SLCELimit, line width=1.05pt,
  domain=-\figLimitYmax:\figLimitYmax, samples=80]
  (x,0,{-x*x/(2*\figRhoLimit)});

\end{groupplot}
\end{tikzpicture}
\caption{Schematic of the second-order focal degeneration.  The
limiting focal factor vanishes quadratically, which produces the
cubic relation $z\asymp-s^3$ and hence the critical meridian profile
$x\asymp |z|^{4/3}$.  The smooth profiles in the middle panel have
the same transition scale and curvature blow-up rate as the exact
family.}
\label{fig:curvature-concentration-schematic}
\end{figure}

Write
\[
M_{\eps}(s,\theta)
=
\bigl(R_{\eps}(s)\cos\theta,
      R_{\eps}(s)\sin\theta,
      Z_{\eps}(s)\bigr),
\]
where
\[
R_{\eps}:=r_{\eps}+\tau\zeta'_{\eps},
\qquad
Z_{\eps}:=\zeta_{\eps}-\tau r'_{\eps}.
\]
At $s=0$,
\[
R_{\eps,0}
=
\frac1\tau+\frac{\eps}{K_{0}},
\qquad
Z_{\eps,0}=0.
\]
Using $r_{\eps}''=-K_{0}r_{\eps}$ and the displayed formula for
$\zeta_{\eps}''$, we obtain
\begin{equation}
\label{eq:4-RZ-prime}
R_{\eps}'=r'_{\eps}J_{s,\eps},
\qquad
Z_{\eps}'=\zeta'_{\eps}J_{s,\eps}.
\end{equation}
After another shrinking, \eqref{eq:4-basic-bounds} and
\eqref{eq:4-Js-quadratic-bounds} give
$|R_{\eps}'|\leq C_{7}$ and
\begin{equation}
\label{eq:4-R-bounds}
\frac{1}{2\tau}
\leq R_{\eps}(s)
\leq\frac{9}{4\tau K_{0}}.
\end{equation}

Use the graph coordinate
\[
z:=-Z_{\eps}(s).
\]
By \eqref{eq:4-RZ-prime} and
\eqref{eq:4-Js-quadratic-bounds},
\[
Z_{\eps}'(s)
=(-\zeta'_{\eps})(-J_{s,\eps})
\geq C_{8}s^{2}.
\]
Thus $s\mapsto-Z_{\eps}(s)$ is strictly decreasing, and for
$-s_{0}<t<s<s_{0}$,
\begin{equation}
\label{eq:4-cubic-distance}
\bigl|Z_{\eps}(s)-Z_{\eps}(t)\bigr|
\geq
C_{8}\int_{t}^{s}\xi^{2}\,d\xi
\geq
\frac{C_{8}}{12}|s-t|^{3}.
\end{equation}
Set
\[
z_{0}:=\frac{C_{8}}{6}s_{0}^{3}.
\]
Then
\[
(-z_{0},z_{0})
\subset
-Z_{\eps}\bigl((-s_{0},s_{0})\bigr)
\]
for every $0\leq\eps\leq1/(2\tau)$.  Let $s=s_{\eps}(z)$ denote the
inverse on this interval and regard $R_{\eps}$ as a function of $z$.
The identities in \eqref{eq:4-RZ-prime} give
\begin{equation}
\label{eq:4-Rz}
\frac{dR_{\eps}}{dz}
=-\frac{r'_{\eps}}{\zeta'_{\eps}},
\qquad
\left(
-\frac{r'_{\eps}}{\zeta'_{\eps}}
\right)'
=
\frac{K_{0}r_{\eps}}{(\zeta'_{\eps})^{3}}.
\end{equation}
In particular,
\[
\left|\frac{dR_{\eps}}{dz}\right|\leq1,
\qquad
\left|
\left(-\frac{r'_{\eps}}{\zeta'_{\eps}}\right)'
\right|
\leq C_{9}.
\]

Let
\[
y_{0}:=\frac{1}{4\tau},
\qquad
Q:=(-y_{0},y_{0})\times(-z_{0},z_{0}).
\]
By \eqref{eq:4-R-bounds},
\[
R_{\eps}(z)^{2}-y^{2}
\geq\frac{3}{16\tau^{2}}
\qquad ((y,z)\in Q).
\]
For $\eps>0$, the parallel surface is therefore represented on $Q$
by the smooth graph
\begin{equation}
\label{eq:4-u-epsilon}
u_{\eps}(y,z)
:=
\sqrt{R_{\eps}(z)^{2}-y^{2}}-R_{\eps,0}.
\end{equation}
Since $|y|/R_{\eps}(z)\leq1/2$, the inherited normal has positive
$x$-component and agrees with \eqref{eq:two-dimensional-graph-normal}.
Thus $u_{\eps}$ is a classical solution of the SLCE in $Q$.  Moreover,
\[
(u_{\eps})_{y}
=-\frac{y}{\sqrt{R_{\eps}^{2}-y^{2}}},
\qquad
(u_{\eps})_{z}
=
\frac{R_{\eps}'R_{\eps}}
{\sqrt{R_{\eps}^{2}-y^{2}}},
\]
and the preceding bounds imply
\[
\|u_{\eps}\|_{C^{1}(Q)}\leq C,
\]
where $C$ is independent of $\eps$.

At the origin, all derivatives of $R_{\eps}$ in the following displays
are taken with respect to $z$.  We have
\[
R_{\eps}'(0)=0,
\qquad
\frac{d^{2}R_{\eps}}{dz^{2}}
=
-\frac{K_{0}r_{\eps}}
{(\zeta'_{\eps})^{4}J_{s,\eps}}.
\]
Consequently,
\begin{equation}
\label{eq:4-Rzz-origin}
R_{\eps}''(0)
=
\frac{1/\tau+\eps}{\tau\eps}
=
\frac{1}{\tau^{2}\eps}+\frac1\tau.
\end{equation}
Formula \eqref{eq:4-u-epsilon} gives
\[
(u_{\eps})_{zz}(0,0)=R_{\eps}''(0),
\qquad
(u_{\eps})_{yy}(0,0)=-\frac1{R_{\eps,0}},
\qquad
(u_{\eps})_{yz}(0,0)=0.
\]
Hence $|D^{2}u_{\eps}(0,0)|\to\infty$, and every $u_{\eps}$ is
nonconvex near the origin.  Since $Du_{\eps}(0,0)=0$, the shape
operator at the origin equals $D^{2}u_{\eps}(0,0)$, so the curvature
also blows up.

For $z,\widetilde z\in(-z_{0},z_{0})$, let
$s=s_{\eps}(z)$ and $t=s_{\eps}(\widetilde z)$.  Combining
\eqref{eq:4-cubic-distance} and \eqref{eq:4-Rz},
\begin{equation}
\label{eq:4-Rprime-holder}
|R_{\eps}'(z)-R_{\eps}'(\widetilde z)|
\leq
C_{10}|z-\widetilde z|^{1/3}.
\end{equation}
For $\eps=0$, define $u_{0}$ by \eqref{eq:4-u-epsilon} using the
continuous inverse of $-Z_{0}$.  Formula \eqref{eq:4-Rz} defines its
derivative continuously at $z=0$, because $r'_{0,0}=0$.  The displayed
formulas for $Du_{\eps}$, \eqref{eq:4-R-bounds}, and
\eqref{eq:4-Rprime-holder} now give
\[
\sup_{0\leq\eps\leq1/(2\tau)}
\|u_{\eps}\|_{C^{1,1/3}(Q)}
\leq C.
\]

\begin{figure}[t]
\centering
\begin{tikzpicture}
\begin{groupplot}[
  group style={group size=2 by 1, horizontal sep=1.10cm},
  width=0.405\textwidth,
  height=0.265\textwidth,
  slce axis,
  title style={font=\scriptsize, align=center}
]

\nextgroupplot[
  title={(a) Meridian section $y=0$},
  xlabel={$z$}, ylabel={$x$},
  xmin=-\figLimitZmax, xmax=\figLimitZmax,
  ymin=-0.025, ymax=\figLimitC*1.08
]
\addplot[SLCELimit, line width=1.25pt,
  domain=-\figLimitZmax:\figLimitZmax, samples=220]
  {\figLimitC*(abs(x))^(4/3)};
\addplot[SLCESmoothA, line width=0.9pt,
  domain=-\figLimitZmax:\figLimitZmax, samples=220]
  {\figLimitC*((x*x+(\figEpsA)^3)^(2/3)-(\figEpsA)^2)};
\addplot[SLCESmoothB, line width=0.9pt, dashed,
  domain=-\figLimitZmax:\figLimitZmax, samples=220]
  {\figLimitC*((x*x+(\figEpsB)^3)^(2/3)-(\figEpsB)^2)};
\addplot[SLCESmoothC, line width=0.9pt, dash dot,
  domain=-\figLimitZmax:\figLimitZmax, samples=220]
  {\figLimitC*((x*x+(\figEpsC)^3)^(2/3)-(\figEpsC)^2)};
\node[slce label, anchor=west] at (axis cs:0.46,0.58)
  {$C|z|^{4/3}$};
\node[slce label, anchor=north west] at (axis cs:-0.92,0.48)
  {$\eps\downarrow0$};
\node[slce label, anchor=west] at (axis cs:0.08,0.24)
  {$|u_z|\asymp |z|^{1/3}$};

\nextgroupplot[
  title={(b) Transverse section $z=0$},
  xlabel={$y$}, ylabel={$x$},
  xmin=-\figLimitYmax, xmax=\figLimitYmax,
  ymin=-0.37, ymax=0.035
]
\addplot[SLCELimit, line width=1.25pt,
  domain=-\figLimitYmax:\figLimitYmax, samples=180]
  {-x*x/(2*\figRhoLimit)};
\node[slce label, anchor=west] at (axis cs:0.12,-0.09)
  {$x=-y^2/(2R_{0,0})$};
\node[slce label, anchor=west] at (axis cs:-0.78,-0.29)
  {$u_{yy}(0,0)=-R_{0,0}^{-1}$};

\end{groupplot}
\end{tikzpicture}
\caption{The principal sections of the limiting profile.  Along
$y=0$, the derivative is exactly of order $|z|^{1/3}$; along
$z=0$, the profile remains an ordinary quadratic arc.  Thus only
the focal direction carries the critical loss of second-order
regularity.}
\label{fig:critical-one-third-sections}
\end{figure}

Let $0<\eps<\min\{1/(2\tau),s_{0}^{2}\}$ and put
\[
\sigma_{\eps}:=\sqrt\eps,
\qquad
z_{\eps}^{\sharp}:=-Z_{\eps}(\sigma_{\eps}).
\]
For sufficiently small $\eps$, one has
$z_{\eps}^{\sharp}\in(-z_{0},0)$.  The estimate for $-r_{\eps}''$
gives
\[
|R_{\eps}'(z_{\eps}^{\sharp})|
=
\left|
\frac{r'_{\eps}(\sigma_{\eps})}
{\zeta'_{\eps}(\sigma_{\eps})}
\right|
\geq
\frac{1}{2\tau}\eps^{1/2}.
\]
Since $-\zeta'_{\eps}\leq1$, \eqref{eq:4-Js-quadratic-bounds} gives
\[
|z_{\eps}^{\sharp}|
\leq
\int_{0}^{\sigma_{\eps}}
\bigl(\tau\eps+C_{5}s^{2}\bigr)\,ds
=
\left(\tau+\frac{C_{5}}{3}\right)\eps^{3/2}.
\]
As $R_{\eps}'(0)=0$ and
$(u_{\eps})_{z}(0,z)=R_{\eps}'(z)$, for every $\beta>1/3$,
\[
[Du_{\eps}]_{C^{0,\beta}(Q)}
\geq
C_{\beta}\eps^{(1-3\beta)/2}
\longrightarrow\infty.
\]

Since $-Z_{\eps}\to-Z_{0}$ and $R_{\eps}\to R_{0}$ uniformly in the
$s$-parameter, strict monotonicity of $-Z_{0}$ gives
$u_{\eps}\to u_{0}$ uniformly on $Q$.  It remains to identify this
limiting profile.  For $\eps=0$, Taylor expansion at $s=0$ gives
\[
r'_{0}(s)
=-\frac{s}{\tau}
+\frac{K_{0}}{6\tau}s^{3}
+O(s^{5}),
\qquad
\zeta'_{0}(s)
=-1+\frac{s^{2}}{2\tau^{2}}+O(s^{4}),
\]
and hence
\[
J_{s,0}(s)
=-\frac{K_{0}}{2\tau^{2}}s^{2}+O(s^{4}),
\qquad
-Z_{0}(s)
=-\frac{K_{0}}{6\tau^{2}}s^{3}+O(s^{5}).
\]
Writing
\[
a_{\Theta}:=\frac{K_{0}}{6\tau^{2}},
\]
the inverse of $z=-Z_{0}(s)$ satisfies
\[
s_{*}(z)
=-a_{\Theta}^{-1/3}
\sgn(z)|z|^{1/3}+O(|z|).
\]
Moreover,
\[
-\frac{r'_{0}(s)}{\zeta'_{0}(s)}
=-\frac{s}{\tau}+O(s^{3}),
\]
so \eqref{eq:4-Rz} yields
\[
R_{0}'(z)
=
c_{\Theta}\sgn(z)|z|^{1/3}+O(|z|),
\qquad
c_{\Theta}
:=
\left(\frac{6}{\tau K_{0}}\right)^{1/3}.
\]
Since $u_{0}(0,z)=R_{0}(z)-R_{0,0}$,
\[
u_{0}(0,z)
=
\frac{3c_{\Theta}}{4}|z|^{4/3}+O(z^{2}).
\]
Thus $u_{0}$ has exactly the $C^{1,1/3}$ regularity in the focal
direction and belongs to no $C^{1,\beta}$ class with $\beta>1/3$.
This completes the proof.
\end{proof}

\begin{remark}[Choice of the constants]
\label{rem:two-dimensional-C2-bookkeeping}
One nonoptimal explicit choice is
\[
\begin{aligned}
C_{1}&:=\frac{3\sqrt{K_{0}}}{2\tau},
&
C_{2}&:=\frac{3}{2\tau},
&
C_{3}&:=\frac{198K_{0}}{\tau}+\frac{15309}{2\tau^{3}},
\\
C_{4}&:=\frac{K_{0}}{4\tau^{2}},
&
C_{5}&:=\frac{81}{32\tau^{2}},
&
C_{6}&:=9\tau K_{0},
\\
C_{7}&:=\frac12,
&
C_{8}&:=\frac{K_{0}}{8\tau^{2}},
&
C_{9}&:=\frac{18}{\tau},
\\
C_{10}
&:=
\frac{18}{\tau}
\left(\frac{96\tau^{2}}{K_{0}}\right)^{1/3}.
\end{aligned}
\]
It is enough to take
\[
s_{0}
:=
\min\left\{
\frac{1}{2\tau C_{1}},
\frac{2\tau}{9},
\sqrt{\frac{2}{3K_{0}}},
\frac{K_{0}}{2\tau^{2}C_{3}},
\frac{K_{0}}{2C_{6}},
\frac{1}{\sqrt{2C_{5}}},
\frac{1}{2\tau C_{7}}
\right\}.
\]
These constants follow directly from the displayed formulas for
$r_{\eps}'''$, $\zeta_{\eps}''$, $J_{s,\eps}'''$,
$J_{\theta,\eps}'$, $R_{\eps}'$, and
$(-r'_{\eps}/\zeta'_{\eps})'$.
\end{remark}

The geometric scaling behind the preceding construction is summarized
in \cref{fig:curvature-concentration-schematic}.  The model profiles in
its middle panel are chosen so that the transition occurs on the scale
$|z|\asymp\eps^{3/2}$ and their second derivative at the
origin is of order $\eps^{-1}$, exactly as in
\eqref{eq:4-Rzz-origin}.

The two distinguished sections of the limiting graph are shown in
\cref{fig:critical-one-third-sections}.

\begin{remark}\label{rem:one-third-endpoint}
The preceding construction does not violate a $C^{1,1/3}$ estimate. Rather, it
realizes the exponent $1/3$ as the exact borderline for this
post-focal degeneration: the family is uniformly bounded in
$C^{1,1/3}$, while every uniform $C^{1,\beta}$ estimate with
$\beta>1/3$ fails.
\end{remark}

\section{A three-dimensional subcritical Lipschitz solution}
\label{sec:three-dimensional-Lipschitz}

In this section, we prove Theorem~\ref{thm:three-dimensional-Lipschitz}. 
The construction follows the mechanism of Mooney--Savin \cite{MS24}, 
adapting the degenerate Bellman equation to the curvature operator. 

Fix
\[
0<\Theta<\frac{\pi}{2},
\qquad
\Theta^{*}:=\frac{\pi}{2}-\Theta
\in\left(0,\frac{\pi}{2}\right).
\]
For a symmetric matrix $M\in\R^{3\times3}_{\mathrm{sym}}$, set
\[
\mathcal{F}(x,M)
:=
\tr\!\left[
\arctan\!\left(P(x)^{1/2}MP(x)^{1/2}\right)
\right].
\]
Thus $\widetilde{F}_{\Theta^{*}}(x,M) = \mathcal{F}(x,M)-\Theta^{*}$.
We denote the derivatives of $\mathcal{F}$ with respect to the matrix entries by $\mathcal{F}^{ij}(x, M) := \frac{\partial \mathcal{F}}{\partial M_{ij}}(x, M)$. If $\widehat M=P(x)^{1/2}MP(x)^{1/2}$, then
\begin{equation}
\label{eq:5-ellipticity}
\mathcal{F}^{ij}(x,M)M_{ij}
=
\left[
P(x)^{1/2}(I+\widehat M^{2})^{-1}P(x)^{1/2}
\right]_{ij}M_{ij}.
\end{equation}
The coefficient matrix $\mathcal{F}^{ij}$ is positive definite, making $\mathcal{F}$ analytic and strictly elliptic in the Hessian variable.

For $\lambda>0$, we consider
\[
\Phi^{\lambda}(x)
:=
\frac{\lambda x_{1}^{2}}{1+x_{3}}
+
\frac{\lambda x_{2}^{2}}{1-x_{3}},
\qquad |x_{3}|<1.
\]
A direct computation shows that 
$\det D^{2}\Phi^{\lambda}\equiv0$ and $D^{2}\Phi^{\lambda}$ has two positive eigenvalues and one zero 
eigenvalue throughout $\{|x_{3}|<1\}$. Furthermore, 
$\cof\bigl(D^{2}\Phi^{\lambda}\bigr)_{33} = 4\lambda^{2}/(1-x_{3}^{2})>0$.

\begin{lemma}
\label{lem:dual-phase-minimum}
The dual phase $\vartheta_{\lambda}(x) := \mathcal{F}\bigl(x,D^{2}\Phi^{\lambda}(x)\bigr)$ has a non-degenerate local minimum at the origin for all $\lambda>0$.
\end{lemma}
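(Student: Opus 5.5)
The plan is to compute the second-order Taylor expansion of $\vartheta_\lambda$ at $x=0$, show that the linear part vanishes, and show that the quadratic part is positive definite.

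The first step uses the kernel to reduce the problem to two eigenvalues. Set $N(x):=P(x)^{1/2}D^2\Phi^\lambda(x)P(x)^{1/2}$. Congruence preserves inertia, so $N(x)$ has exactly one zero eigenvalue and two positive eigenvalues $\mu_1,\mu_2$, since $D^2\Phi^\lambda$ has rank two with two positive eigenvalues. Hence $\vartheta_\lambda=\arctan\mu_1+\arctan\mu_2$. With $T:=\tr N=\mu_1+\mu_2$ and $\sigma_2:=\sigma_2(N)=\mu_1\mu_2$, this gives the smooth closed form
\[
\vartheta_\lambda=\operatorname{arccot}\!\left(\frac{1-\sigma_2}{T}\right)
\]
near $0$, since $T>0$ there. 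In particular no eigenvalue perturbation theory is needed for the zero eigenvalue. At the origin, $P(0)=I$ and $D^2\Phi^\lambda(0)=\diag(2\lambda,2\lambda,0)$, so $T_0=4\lambda$ and $(\sigma_2)_0=4\lambda^2$.

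The second step expands the ingredients to second order. Writing $a=1+x_3$ and $b=1-x_3$, one has
\[
\Phi_{11}=2\lambda/a,\quad \Phi_{22}=2\lambda/b,\quad \Phi_{13}=-2\lambda x_1/a^2,\quad \Phi_{23}=2\lambda x_2/b^2,
\]
\[
\Phi_{12}=0,\qquad \Phi_{33}=2\lambda x_1^2/a^3+2\lambda x_2^2/b^3 .
\]
Also $P^{1/2}=I+\tfrac14|x|^2I+\tfrac12 x\otimes x+O(|x|^3)$, so $T$ and $\sigma_2$ can be expanded through quadratic order. At first order the $\mp 2\lambda x_3$ corrections to $\Phi_{11}$ and $\Phi_{22}$ cancel in $T$, and their product contribution to $\sigma_2$ also cancels. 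The off-diagonal terms and $\Phi_{33}$ are first order only through $\Phi_{13}$ and $\Phi_{23}$, and these enter $T$ and $\sigma_2$ quadratically. Hence $D\vartheta_\lambda(0)=0$.

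The third step checks positivity of the quadratic part. The symmetries $x_1\mapsto -x_1$, $x_2\mapsto -x_2$, and $(x_1,x_2,x_3)\mapsto(x_2,x_1,-x_3)$ preserve $\vartheta_\lambda$. They kill all mixed terms, so the Hessian has the form
\[
A(x_1^2+x_2^2)+Bx_3^2 .
\]
It then remains to compute $A$ and $B$ and check that both are positive. Heuristically, in the $x_3$ direction the term $4\lambda x_3^2$ from $\Phi_{11}+\Phi_{22}$, weighted by $(1+4\lambda^2)^{-1}$, beats the concavity of $\arctan$. The concavity contributes $-16\lambda^3x_3^2/(1+4\lambda^2)^2$, leaving $4\lambda/(1+4\lambda^2)^2>0$, and the isotropic part $\tfrac14|x|^2$ of $P^{1/2}$ only adds a positive amount. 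In the $x_1,x_2$ directions the off-diagonal coupling contributes positively, roughly $\Phi_{13}^2/(2\lambda)=2\lambda x_1^2$ to the large eigenvalues. The factor $P^{1/2}$ adds $\lambda(|x|^2+x_1^2)$-type positive terms, because $M_0$ is positive semidefinite. The $\Phi_{33}$ term is exactly absorbed by the zero eigenvalue, which is why working with $T$ and $\sigma_2$ is convenient.

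The main obstacle is the bookkeeping in this last computation, namely getting the exact coefficients of $x_3^2$ in $T$ and $\sigma_2$, where the cancellation between convexity of $1/a$ and $1/b$ and concavity of $\arctan$ is delicate. I will carry it out by expanding $(1-\sigma_2)/T$ directly to second order in $x_3$ at $x_1=x_2=0$, where $N$ is block diagonal. I will then do the same along the $x_1$-axis, where the $3\times3$ algebra reduces to a $2\times2$ block in $(e_1,e_3)$.
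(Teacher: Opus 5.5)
Your proposal is correct, and it takes a genuinely different route from the paper.

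\textbf{The paper's route.} The paper applies the general second-variation formula for $\tr f(S)$ with $f=\arctan$. It computes $\partial_kS(0)$ and $\partial_{kl}S(0)$ entrywise, and it needs the divided-difference term that couples the eigenvalue $2\lambda$ to the kernel. In the $x_1$-direction this term contributes $-16\lambda^3/(1+4\lambda^2)$, which is compensated by $f'(0)(\partial_{11}S)_{33}=4\lambda$ coming from $\Phi_{33}$. The mixed entries are removed by evenness in $x_1$ and $x_2$.

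\textbf{Your route.} You use $\det N\equiv 0$ to write $\vartheta_\lambda=\operatorname{arccot}\bigl((1-\sigma_2)/T\bigr)$. Then only entrywise Taylor expansion of polynomial invariants is needed, with no spectral perturbation theory and no divided differences. The interaction of $\Phi_{33}$ with $\Phi_{13},\Phi_{23}$ is handled by ordinary algebra in $\sigma_2$. Your extra swap symmetry $(x_1,x_2,x_3)\mapsto(x_2,x_1,-x_3)$ reduces everything to two one-variable computations along the axes, where the nonzero eigenvalues are explicit:
\begin{itemize}
\item on the $x_3$-axis, $2\lambda\sqrt{1+t^2}/(1\pm t)$;
\item on the $x_1$-axis, $2\lambda\sqrt{1+t^2}\,(1+2t^2)$ and $2\lambda\sqrt{1+t^2}$.
\end{itemize}
This gives $\partial_{11}\vartheta_\lambda(0)=\partial_{22}\vartheta_\lambda(0)=12\lambda/(1+4\lambda^2)$ and $\partial_{33}\vartheta_\lambda(0)=(12\lambda+16\lambda^3)/(1+4\lambda^2)^2$, in agreement with the paper.

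\textbf{Trade-off.} Your route is more elementary and easier to check, but it relies on the exact rank-two structure of $D^2\Phi^\lambda$. The paper's formula is more mechanical and would work without an identically vanishing eigenvalue.

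\textbf{A small correction.} In your heuristic, the second-order contribution of $P^{1/2}$ to $T=\tr(P\,D^2\Phi^\lambda)$ is $2\lambda|x|^2+2\lambda(x_1^2+x_2^2)$, which is $4\lambda t^2$ on the $x_1$-axis, not $\lambda(|x|^2+x_1^2)$. Since you plan to compute the coefficients exactly, this does not affect the argument.
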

\begin{proof}
Let $S(x) := \tilde{S}[\Phi^\lambda]$ and $f(t) = \arctan t$. Using the expansion
\[
P(x)^{1/2} = I + \frac{1}{4}|x|^2 I + \frac{1}{2}x\otimes x + O(|x|^4)
\]  
and
\[
\Phi^\lambda(x) = \lambda(x_1^2 + x_2^2 + x_3(x_2^2 - x_1^2) + x_3^2(x_1^2 + x_2^2) + O(|x|^5)),
\]
we have $S(0) = \diag(2\lambda, 2\lambda, 0)$. Letting $\mu_{i}$ denote the eigenvalues of $M_{\lambda}(0)$, the standard spectral formulas for the trace of a matrix function yield
\[
\partial_{k}\vartheta_{\lambda} = \sum_{i=1}^{3} f'(\mu_{i})\bigl(\partial_{k}S\bigr)_{ii},
\]
\begin{align*}
\partial_{kl}\vartheta_{\lambda} = & \sum_{i=1}^{3} f'(\mu_{i})\bigl(\partial_{kl}S\bigr)_{ii}
+ \sum_{i=1}^{3} f''(\mu_{i})\bigl(\partial_{k}S\bigr)_{ii}\bigl(\partial_{l}S\bigr)_{ii} \\
& + 2\sum_{i<j} \frac{f'(\mu_{i}) - f'(\mu_{j})}{\mu_{i} - \mu_{j}} \bigl(\partial_{k}S\bigr)_{ij}\bigl(\partial_{l}S\bigr)_{ij}.
\end{align*}
At $x=0$, evaluating the derivatives gives 
$\partial_{1}S(0) = -2\lambda(E_{13}+E_{31})$, 
$\partial_{2}S(0) = 2\lambda(E_{23}+E_{32})$, 
and $\partial_{3}S(0) = 2\lambda(-E_{11}+E_{22})$,
where the matrix unit $E_{ij}$ is defined entry-wise as $(E_{ij})_{kl} = \delta_{ik}\delta_{jl}$.
Substituting these into the first variation, we find $\partial_{k}\vartheta_{\lambda}(0) = 0$ for all $k$, hence 
\[
D\vartheta_{\lambda}(0) = 0.
\]

Evaluating 
$\partial_{11}S(0) = 2\lambda\diag(3,1,2)$,  
$\partial_{22}S(0) = 2\lambda\diag(1,3,2)$, and 
$\partial_{33}S(0) = 2\lambda\diag(3,3,0)$, and 
using $f'(2\lambda)=(1+4\lambda^2)^{-1}$ and $f''(2\lambda)=-4\lambda(1+4\lambda^2)^{-2}$, 
the spectral formula reduces directly to
\begin{equation}
\label{eq:5-phase-Hessian}
D^{2}\vartheta_{\lambda}(0)
=
\diag\!\left(
\frac{12\lambda}{1+4\lambda^{2}},
\frac{12\lambda}{1+4\lambda^{2}},
\frac{12\lambda+16\lambda^{3}}{(1+4\lambda^{2})^{2}}
\right)>0.
\end{equation}
The mixed entries vanish because $\vartheta_{\lambda}$ is even in $x_{1}$ and $x_{2}$ separately. Thus $\vartheta_{\lambda}$ is strictly convex near the origin.
\end{proof}

Let $\lambda_{0} := \frac12\tan(\Theta^{*}/2)$, so that $\vartheta_{\lambda_{0}}(0)=\Theta^{*}$. Lemma~\ref{lem:dual-phase-minimum} implies that for $0 < \lambda < \lambda_{0}$ sufficiently close to $\lambda_{0}$, the connected component $K$ of $\{\vartheta_{\lambda}\leq\Theta^{*}\}$ containing the origin is a compact, strictly convex analytic body. On $K$, we have $\det D^{2}\Phi^{\lambda}=0$ and $\mathcal{F}(x,D^{2}\Phi^{\lambda})\leq\Theta^{*}$, with strict phase inequality in $\Int K$.

\begin{lemma}
\label{lem:exterior-cauchy-determinant}
There exists an analytic solution $v$ to the Cauchy problem
\[
\begin{cases}
\mathcal{F}(x,D^{2}v)=\Theta^{*},
&\text{outside }K,
\\
v=\Phi^{\lambda},\quad v_{\nu}=\Phi^{\lambda}_{\nu},
&\text{on }\partial K,
\end{cases}
\]
defined on a uniform exterior collar of $\partial K$ and $\nu$ denotes the exterior unit normal to $\partial K$.. Moreover, $\det D^{2}v<0$ on this collar.
\end{lemma}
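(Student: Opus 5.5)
The plan is to use the Cauchy--Kovalevskaya theorem for existence, and then to read off the sign of $\det D^{2}v$ from a first-order expansion across $\partial K$, exactly as in Mooney--Savin.

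\textbf{Noncharacteristic data.} Since $\vartheta_{\lambda}$ is analytic with a nondegenerate minimum at the origin, $\partial K$ is a compact real-analytic strictly convex surface. The Cauchy data $(\Phi^{\lambda},\Phi^{\lambda}_{\nu})$ are analytic on $\partial K$. The data determine all second derivatives of $v$ on $\partial K$ except $v_{\nu\nu}$. Solving for it amounts to the scalar equation
\[
\mathcal{F}\bigl(x,D^{2}\Phi^{\lambda}+(v_{\nu\nu}-\Phi^{\lambda}_{\nu\nu})\,\nu\otimes\nu\bigr)=\Theta^{*}.
\]
By \eqref{eq:5-ellipticity}, $\mathcal{F}^{ij}\nu_{i}\nu_{j}>0$, so the surface is noncharacteristic and the implicit function theorem applies. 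Because $\mathcal{F}(x,D^{2}\Phi^{\lambda})=\vartheta_{\lambda}=\Theta^{*}$ on $\partial K$, the solution is $v_{\nu\nu}=\Phi^{\lambda}_{\nu\nu}$, so $D^{2}v=D^{2}\Phi^{\lambda}$ on $\partial K$. Cauchy--Kovalevskaya then gives a local analytic solution near each boundary point, and uniqueness patches these together. Compactness of $\partial K$ and uniform analytic bounds give a collar of uniform width $\delta$.

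\textbf{Sign of the determinant.} On $\partial K$ we have $\det D^{2}v=\det D^{2}\Phi^{\lambda}=0$. Hence it suffices to show
\[
\partial_{\nu}\bigl(\det D^{2}v-\det D^{2}\Phi^{\lambda}\bigr)<0 \quad\text{on }\partial K;
\]
shrinking $\delta$ then gives $\det D^{2}v<0$ in the collar, since $\det D^{2}\Phi^{\lambda}\equiv0$. Set $w:=v-\Phi^{\lambda}$. Then $w$ vanishes to second order on $\partial K$, so $D^{3}w=w_{\nu\nu\nu}\,\nu\otimes\nu\otimes\nu$ there. Therefore
\[
\partial_{\nu}\det D^{2}v-\partial_{\nu}\det D^{2}\Phi^{\lambda}=\cof(D^{2}\Phi^{\lambda})_{\nu\nu}\,w_{\nu\nu\nu}.
\]
Differentiating the equation in $\nu$, the explicit $x$-dependence of $\mathcal{F}$ cancels because the Hessians agree on $\partial K$:
\[
\mathcal{F}^{ij}\partial_{\nu}v_{ij}+\partial_{x}\mathcal{F}\cdot\nu=0,
\qquad
\partial_{\nu}\vartheta_{\lambda}=\mathcal{F}^{ij}\partial_{\nu}\Phi^{\lambda}_{ij}+\partial_{x}\mathcal{F}\cdot\nu .
\]
Subtracting gives
\[
\mathcal{F}^{\nu\nu}\,w_{\nu\nu\nu}=-\partial_{\nu}\vartheta_{\lambda}.
\]
Here $\partial_{\nu}\vartheta_{\lambda}>0$, because $\vartheta_{\lambda}$ is strictly convex near $0$, $\partial K$ is a level set, and $\nu$ points outward. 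Thus $w_{\nu\nu\nu}<0$.

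\textbf{Main obstacle: the cofactor.} It remains to show $\cof(D^{2}\Phi^{\lambda})_{\nu\nu}>0$. This is where I expect the real work. The rank-two matrix $D^{2}\Phi^{\lambda}$ is positive semidefinite with kernel $\xi$, so its cofactor matrix is $c\,\xi\otimes\xi$ with $c>0$. Hence $\cof_{\nu\nu}=c(\xi\cdot\nu)^{2}$, and positivity requires that $\nu$ is not orthogonal to the kernel on $\partial K$. Near the origin, $\xi\approx e_{3}$ and $\cof_{33}=4\lambda^{2}/(1-x_{3}^{2})$. The danger is the equatorial region of $\partial K$, where $\nu\perp e_{3}$ at leading order. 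The plan there is to compute $\xi(x)$ explicitly for $\Phi^{\lambda}$; it is proportional to $(x_{1}/(1+x_{3}),\,-x_{2}/(1-x_{3}),\,1)$ up to normalization, since $\Phi^{\lambda}$ is homogeneous of degree one in suitable variables. Then compare with $\nu\propto D\vartheta_{\lambda}$, using the Hessian \eqref{eq:5-phase-Hessian} and the small size of $K$ as $\lambda\uparrow\lambda_{0}$. If $\xi\cdot\nu$ does vanish somewhere, I would fall back on the Mooney--Savin device: modify the core, e.g.\ add a small multiple of a rank-one term or choose the rank-two profile so that $\xi$ tracks $\nu$, and verify the nondegeneracy $\xi\cdot\nu\neq0$ for that choice.
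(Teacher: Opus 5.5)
Your existence argument and your first-order computation are the same as the paper's. The boundary is noncharacteristic by \eqref{eq:5-ellipticity}, $D^{2}v=D^{2}\Phi^{\lambda}$ on $\partial K$, $\omega:=v_{\nu\nu\nu}-\Phi^{\lambda}_{\nu\nu\nu}<0$ follows from $\mathcal{F}^{\nu\nu}\omega=-\partial_{\nu}\vartheta_{\lambda}$, and $\partial_{\nu}\det D^{2}v=c\,(\xi\cdot\nu)^{2}\omega$.

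The gap is exactly the step you flag, and it is not merely hard but false: $\cof(D^{2}\Phi^{\lambda})_{\nu\nu}$ \emph{must} vanish somewhere on $\partial K$. With your $\xi=(x_{1}/(1+x_{3}),-x_{2}/(1-x_{3}),1)$ and $K$ small, $\xi\cdot\nu\approx\nu_{3}$. The Gauss map of the closed strictly convex surface $\partial K$ is onto $\Sph^{2}$, so $\xi\cdot\nu$ is positive near the top of $\partial K$, negative near the bottom, and zero on a nonempty set near the ``equator''.

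Your fallback cannot repair this. For any core whose Hessian has a continuous one-dimensional kernel on a simply connected neighbourhood of $K$, a kernel field is defined on all of $K$. Its restriction to $\partial K\cong\Sph^{2}$ therefore has degree zero. A field with $\xi\cdot\nu>0$ everywhere would be homotopic to $\nu$, which has degree one. So the criterion $\partial_{\nu}\det D^{2}v<0$ on all of $\partial K$ is unattainable, and at tangency points the sign of $\det D^{2}v$ in the collar is decided at second order.

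The missing idea, which is what the paper does, is to compute $\partial_{\nu}^{2}\det D^{2}v$ at points where $\xi$ (now normalized to unit length) is tangent to $\partial K$. Write $G=\det$. Differentiating $\det D^{2}\Phi^{\lambda}\equiv0$ gives $c\,\Phi^{\lambda}_{\xi\xi e}=0$ for every direction $e$. Combined with $D^{3}(v-\Phi^{\lambda})=\omega\,\nu\otimes\nu\otimes\nu$ on $\partial K$, this kills the quadratic term $G_{ij,kl}(v_{ij\nu}v_{kl\nu}-\Phi^{\lambda}_{ij\nu}\Phi^{\lambda}_{kl\nu})$. That term reduces to $2\omega\,G_{\nu\nu,\xi\xi}\Phi^{\lambda}_{\xi\xi\nu}+\omega^{2}G_{\nu\nu,\nu\nu}=0$, using that $t\mapsto\det(M+t\,\nu\otimes\nu)$ is affine.

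What remains is $c\,(v-\Phi^{\lambda})_{\xi\xi\nu\nu}$. Differentiating $(v-\Phi^{\lambda})_{\nu\nu}=0$ twice along the normal section of $\partial K$ in direction $\xi$ gives $(v-\Phi^{\lambda})_{\xi\xi\nu\nu}=\kappa_{\xi}\,\omega$, where $\kappa_{\xi}>0$ is the normal curvature. Hence $\partial_{\nu}^{2}\det D^{2}v=c\,\kappa_{\xi}\omega<0$ at such points. This is where the strict convexity of $\partial K$ is used, and your outline never uses it.

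To close the argument: $\partial_{\nu}\det D^{2}v\leq0$ everywhere on $\partial K$. The second normal derivative stays uniformly negative near the (compact) tangency set, and the first is uniformly negative away from it. A Taylor expansion in the normal variable plus compactness then gives $\det D^{2}v<0$ on a uniform exterior collar.
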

\begin{proof}
By \eqref{eq:5-ellipticity}, $\mathcal{F}^{ij}\nu_{i}\nu_{j} > 0$, ensuring the boundary is non-characteristic. The Cauchy--Kovalevskaya theorem then yields a unique analytic solution $v$ on an exterior collar.
Because $v$ and $\Phi^{\lambda}$ share the same Cauchy data and solve the same equation on $\partial K$, their full Hessians agree, so $D^{2}v=D^{2}\Phi^{\lambda}$ on $\partial K$.
Evaluating the normal derivative of the phase equation gives
$0 < \partial_{\nu}\vartheta_{\lambda} = -\mathcal{F}^{\nu\nu}(v_{\nu\nu\nu}-\Phi^{\lambda}_{\nu\nu\nu})$,
which implies $\omega := v_{\nu\nu\nu} - \Phi^{\lambda}_{\nu\nu\nu} < 0$ on $\partial K$.

To determine the sign of the determinant on the collar, let $G(M) = \det M$. We compute the normal derivatives of $G(D^2v)$ at a point $p\in \partial K$. Since $G(D^2v) = G(D^2\Phi^\lambda) = 0$ on the boundary, we have
\[
\partial_{\nu}(G(D^{2}v)) = G_{ij}(v_{ij\nu} - \Phi^{\lambda}_{ij\nu}) = G_{\nu\nu}(v_{\nu\nu\nu} - \Phi^{\lambda}_{\nu\nu\nu}) = G_{\nu\nu}\omega.
\]
Let $\xi$ span the kernel of $D^{2}\Phi^{\lambda}(p)$. The cofactor matrix is given by $G_{ij} = \gamma\xi_{i}\xi_{j}$ for some $\gamma>0$.
If $\xi$ is not tangent to $\partial K$, then $G_{\nu\nu} = \gamma(\xi\cdot\nu)^{2} > 0$, yielding $\partial_{\nu}(G(D^{2}v)) < 0$.
If $\xi$ is tangent to $\partial K$, then $G_{\nu\nu} = 0$, implying $\partial_{\nu}(G(D^{2}v)) = 0$. In this case, we have $G_{\xi\xi} v_{\xi\xi\nu} = 0$, giving $v_{\xi\xi\nu} = \Phi^{\lambda}_{\xi\xi\nu} = 0$. We then compute the second normal derivative:
\[
\partial_{\nu\nu}(G(D^{2}v)) = G_{\xi\xi}(v_{\xi\xi\nu\nu} - \Phi^{\lambda}_{\xi\xi\nu\nu}) + G_{ij,kl}(v_{ij\nu}v_{kl\nu} - \Phi^{\lambda}_{ij\nu}\Phi^{\lambda}_{kl\nu}) = I + II.
\]
Since all third-order derivatives of $v$ and $\Phi^{\lambda}$ involving a tangential direction agree, the only possible non-zero terms in $II$ are those with $i=j=\nu$ or $k=l=\nu$. However, $G_{\nu\nu,kl} = 0$ unless $(k,l) = (\xi,\xi)$. Thus, the term $II$ vanishes.
To evaluate $I$, we differentiate $v_{\nu\nu} - \Phi^{\lambda}_{\nu\nu} = 0$ twice along the boundary geodesic in direction $\xi$ to obtain $(v_{\nu\nu} - \Phi^{\lambda}_{\nu\nu})_{\xi\xi} = \kappa_{\xi}(v_{\nu\nu\nu} - \Phi^{\lambda}_{\nu\nu\nu})$, where $\kappa_{\xi}>0$ is the normal curvature of $\partial K$.
Consequently, $\partial_{\nu\nu}(G(D^{2}v)) = \gamma\kappa_{\xi}\omega < 0$. By compactness, $\det D^2v < 0$ on a uniform exterior collar.
\end{proof}

We define $w\in C^{2,1}$ by piecing together $\Phi^\lambda$ and $v$:
\[
w(x)
:=
\begin{cases}
\Phi^{\lambda}(x),&x\in K,
\\
v(x),&x\text{ in the exterior collar}.
\end{cases}
\]
This yields a solution to the degenerate Bellman equation
$\max\left\{\widetilde{F}_{\Theta^{*}}(x,D^{2}w), \det D^{2}w\right\}=0$.
By continuity, we may assume the collar is small enough that $D^{2}w$ retains the inertia $(+,+,-)$ outside $K$, and $\cof(D^{2}w)_{33}>0$ globally.

\begin{lemma}
\label{lem:gradient-collapse-injectivity}
For a sufficiently small neighborhood $U_{0}$ of $K$, the gradient map $\nabla w$ collapses vertical fibres of a transformed convex body onto an analytic surface, and is a global diffeomorphism outside this surface.
\end{lemma}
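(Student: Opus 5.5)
The plan is to make the collapse explicit for $\Phi^{\lambda}$, and then to prove injectivity away from $K$ by a slice-by-slice monotonicity argument in the spirit of Mooney--Savin. The argument uses only $\cof(D^{2}w)_{33}>0$, the sign of $\det D^{2}w$, and $\det D^{2}\Phi^{\lambda}=0$.

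\emph{Step 1: explicit collapse on $K$.} Introduce the analytic change of variables
\[
T(x):=\Bigl(\frac{x_{1}}{1+x_{3}},\frac{x_{2}}{1-x_{3}},x_{3}\Bigr)=:(a,b,x_{3}),
\]
which is a diffeomorphism of $\{|x_{3}|<1\}$. A direct computation gives
\[
\nabla\Phi^{\lambda}=\bigl(2\lambda a,\;2\lambda b,\;\lambda(b^{2}-a^{2})\bigr).
\]
So $\nabla\Phi^{\lambda}$ depends only on $(a,b)$, and it is constant along the lines $x_{1}=a(1+x_{3})$, $x_{2}=b(1-x_{3})$. These lines span the kernel of $D^{2}\Phi^{\lambda}$ and become vertical lines in the $T$-picture. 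Since $K$ is convex and these fibres are straight lines in $x$-space, each fibre meets $K$ in a segment, or not at all. Hence $T(K)$ is fibred by vertical segments. The map $\nabla w=\nabla\Phi^{\lambda}$ sends each such segment to a single point of the analytic saddle
\[
\Sigma:=\{y_{3}=(y_{2}^{2}-y_{1}^{2})/(4\lambda)\},
\]
and the induced map on the base $(a,b)\mapsto(2\lambda a,2\lambda b)$ is injective. This is the collapse statement, with $\nabla w(K)$ an analytic surface with boundary in $\Sigma$.

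\emph{Step 2: injectivity via horizontal slices.} Choose $U_{0}$ to be a small convex neighbourhood of $K$, contained in $K\cup(\text{collar})$. Then the horizontal slices $U_{0}\cap\{x_{3}=t\}$ are convex. On each slice, the $2\times2$ block $(w_{ij})_{i,j\le2}$ is close to $2\lambda I$ near the origin, since $\cof(D^{2}w)_{33}>0$ and we may shrink everything around $0$. So $x'\mapsto(w_{1},w_{2})(x',t)$ is the gradient of a strictly convex function on a convex set, and is therefore injective.

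Fix $(y_{1},y_{2})$. By the implicit function theorem, the preimage $\{(w_{1},w_{2})=(y_{1},y_{2})\}$ is then a single curve $x_{3}\mapsto\gamma(x_{3})$, with at most one point per slice. Along this curve, Cramer's rule gives
\[
\frac{d}{dx_{3}}\,w_{3}(\gamma(x_{3}))=\frac{\det D^{2}w}{\cof(D^{2}w)_{33}}.
\]
This quantity vanishes on $K$ and is strictly negative in the collar by Lemma~\ref{lem:exterior-cauchy-determinant}.

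\emph{Step 3: conclusion.} Inside $K$ the curve $\gamma$ is exactly a kernel fibre from Step 1, hence a single segment $\gamma\cap K$. The function $w_{3}\circ\gamma$ is therefore constant on that segment and strictly decreasing before and after it. Consequently two distinct points of $U_{0}$ have the same image only if both lie on the same fibre segment in $K$. Points of $U_{0}\setminus K$ are mapped strictly above or below $\nabla w(K)$ along their $(y_{1},y_{2})$-column, so they never land on the collapsed surface.

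Off $K$ we have $\det D^{2}w\neq0$, so the inverse function theorem makes $\nabla w$ a local diffeomorphism there. Combined with injectivity, this shows $\nabla w$ is a diffeomorphism of $U_{0}\setminus K$ onto its image, which lies outside the surface $\nabla w(K)$.

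\emph{Main obstacle.} The delicate point is Step 2: guaranteeing that each preimage curve $\gamma$ is connected and crosses $K$ in one segment only. This needs $U_{0}$ chosen convex in the slice directions, and needs the $2\times2$ block to stay uniformly positive on all of $U_{0}$, not merely $\cof_{33}>0$. I would first try to secure both by taking $\lambda$ close to $\lambda_{0}$, so that $K$ is small, and the collar thin. By continuity from the origin, $(w_{ij})_{i,j\le2}\ge\lambda I$ on $U_{0}$. A $C^{2,1}$ perturbation argument across $\partial K$ then suffices, because $D^{2}w$ is continuous there.
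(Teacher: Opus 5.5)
Your argument is essentially the paper's proof. Your level curves $\gamma$ of $(w_1,w_2)$ are the vertical lines in the paper's coordinates $H(x)=(w_1,w_2,x_3)$, your slice-wise injectivity plays the role of the paper's claim that $H$ is a global diffeomorphism, and your Cramer's-rule identity $\frac{d}{dx_3}w_3(\gamma)=\det D^2w/\cof(D^2w)_{33}$ is the paper's $\partial_{z_3}T_3=\det D^2w/\det DH$. Your explicit Step~1 sharpens the paper: the fibres in $K$ are straight kernel lines, so their segment structure follows from convexity of $K$ itself rather than of $H(K)$, and the collapsed set visibly lies on the analytic saddle $y_3=(y_2^2-y_1^2)/(4\lambda)$. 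The connectedness of $\gamma$ in $U_0$ that you flag is also left implicit in the paper, which tacitly assumes that vertical lines meet $D=H(U_0)$ in connected sets.
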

\begin{proof}
Consider the coordinate map $H(x):=(w_{1}(x),w_{2}(x),x_{3})$. The Jacobian matrix of $H$ is
\[
DH = \begin{pmatrix} w_{11} & w_{12} & w_{13} \\ w_{21} & w_{22} & w_{23} \\ 0 & 0 & 1 \end{pmatrix},
\]
which gives $\det DH = w_{11}w_{22} - w_{12}^{2} = \cof(D^{2}w)_{33} > 0$.
By taking $U_{0}$ sufficiently small, $D^{2}w$ is uniformly close to $2\lambda(I-e_{3}\otimes e_{3})$, making $DH$ uniformly close to $\diag(2\lambda, 2\lambda, 1)$. Thus, $H$ is a distance-expanding global diffeomorphism onto its image, and $H(K)$ is a strictly convex analytic body.

In the domain $D := H(U_0)$, we define the transformed gradient map $T := \nabla w\circ H^{-1}$. Given the structure of $H$, we can explicitly write $T(z) = (z_{1}, z_{2}, w_{3}(H^{-1}(z)))$. The Jacobian matrix $DT(z)$ is therefore lower triangular:
\[
DT(z) = \begin{pmatrix} 1 & 0 & 0 \\ 0 & 1 & 0 \\ \partial_{z_{1}}T_{3} & \partial_{z_{2}}T_{3} & \partial_{z_{3}}T_{3} \end{pmatrix}.
\]
Consequently, its determinant is exactly the vertical derivative: $\det DT(z) = \partial_{z_{3}}T_{3}(z)$.
On the other hand, the chain rule gives $DT(z) = D^{2}w(H^{-1}(z))\,DH^{-1}(z)$, which yields the identity
\[
\partial_{z_{3}}T_{3}(z) = \det DT(z) = \frac{\det D^{2}w(H^{-1}(z))}{\det DH(H^{-1}(z))}.
\]
By Lemma~\ref{lem:exterior-cauchy-determinant}, $\partial_{z_{3}}T_{3} = 0$ on $H(K)$ and $\partial_{z_{3}}T_{3} < 0$ on $D\setminus H(K)$. Since $H(K)$ is convex, its intersection with any vertical line is a connected segment. The strict monotonicity outside $H(K)$ implies $T_{3}$ is constant on vertical segments inside $H(K)$ and strictly decreasing outside. Thus, $T$ collapses each vertical segment of $H(K)$ to a single point on a compact analytic surface $\Gamma_{0}$, and is injective on $D\setminus H(K)$.
\end{proof}

\begin{figure}[t]
\centering
\begin{tikzpicture}[
    x=0.94cm,
    y=0.94cm,
    >=Latex,
    every node/.style={font=\small},
    map arrow/.style={-{Stealth[length=2.1mm,width=1.45mm]},
      line width=0.75pt, draw=black!78},
    outer domain/.style={draw=SLCESeed!88, line width=0.72pt,
      fill=SLCEGrid!28},
    bellman core/.style={draw=SLCESmoothB!75!black, line width=0.78pt,
      fill=SLCESmoothB!17},
    collapsed surface/.style={draw=SLCESingular!78!black,
      line width=0.78pt, fill=SLCESingular!13},
    guide line/.style={densely dashed, line width=0.65pt,
      draw=black!58},
    fibre/.style={line width=1.45pt, draw=SLCESmoothB!80!black},
    panel label/.style={font=\small, align=center},
    object label/.style={font=\small, fill=white, fill opacity=0.88,
      text opacity=1, rounded corners=0.5pt, inner sep=1.2pt}
]

\begin{scope}[shift={(0,0)}]
    \draw[outer domain]
        plot[smooth cycle, tension=0.75] coordinates {
            (-1.45,0.20) (-1.00,1.10) (0.05,1.35)
            (1.05,0.98) (1.42,0.10) (1.05,-0.85)
            (0.02,-1.20) (-1.10,-0.82)
        };

    \draw[bellman core]
        plot[smooth cycle, tension=0.85] coordinates {
            (-0.70,0.10) (-0.42,0.62) (0.10,0.72)
            (0.62,0.42) (0.70,-0.10) (0.42,-0.55)
            (-0.12,-0.65) (-0.58,-0.35)
        };

    \node[object label] at (0,0.02) {$K$};
    \node[object label] at (0.88,0.87) {$H^{-1}(D)$};
    \node[panel label] at (0,-1.55)
        {\textup{(a)} $x=(x_{1},x_{2},x_{3})$};
\end{scope}

\draw[map arrow] (1.70,0) -- (3.05,0)
    node[midway, above=2pt] {$H$};

\begin{scope}[shift={(4.65,0)}]
    \draw[outer domain]
        plot[smooth cycle, tension=0.75] coordinates {
            (-1.50,0.20) (-1.10,1.15) (0.05,1.38)
            (1.15,1.03) (1.55,0.05) (1.12,-0.98)
            (0.00,-1.28) (-1.18,-0.88)
        };

    \draw[bellman core]
        plot[smooth cycle, tension=0.85] coordinates {
            (-0.78,0.18) (-0.45,0.75) (0.12,0.85)
            (0.72,0.48) (0.82,-0.16) (0.45,-0.72)
            (-0.15,-0.82) (-0.70,-0.40)
        };

    \draw[guide line] (0.25,-1.15) -- (0.25,1.20);
    \draw[fibre] (0.25,-0.55) -- (0.25,0.55);
    \fill[SLCESmoothB!82!black] (0.25,-0.55) circle (1.35pt);
    \fill[SLCESmoothB!82!black] (0.25,0.55) circle (1.35pt);

    \draw[decorate,
      decoration={brace, amplitude=4pt, mirror},
      line width=0.58pt, draw=black!72]
        (0.47,0.55) -- (0.47,-0.55)
        node[midway, right=5pt, object label] {$L(z')$};

    \node[object label] at (-0.20,0.05) {$H(K)$};
    \node[object label] at (1.04,0.88) {$D$};
    \node[object label] at (0.25,1.42) {vertical fibre};
    \node[panel label] at (0,-1.55)
        {\textup{(b)} $z=(w_{1},w_{2},x_{3})$};
\end{scope}

\draw[map arrow] (6.45,0) -- (7.95,0)
    node[midway, above=2pt] {$T=\nabla w\circ H^{-1}$};

\begin{scope}[shift={(9.65,0)}]
    \draw[outer domain]
        plot[smooth cycle, tension=0.75] coordinates {
            (-1.35,0.10) (-0.92,1.02) (0.05,1.28)
            (1.05,0.98) (1.35,0.05) (0.98,-0.90)
            (0.00,-1.18) (-1.05,-0.78)
        };

    \draw[collapsed surface]
        plot[smooth, tension=0.9] coordinates {
            (-0.72,-0.08) (-0.35,0.18) (0.12,0.25)
            (0.58,0.05) (0.82,-0.25)
        }
        --
        plot[smooth, tension=0.9] coordinates {
            (0.78,-0.42) (0.35,-0.18) (-0.12,-0.22)
            (-0.58,-0.42) (-0.78,-0.25)
        }
        -- cycle;

    \fill[SLCESingular!85!black] (0.12,0.02) circle (1.55pt);
    \node[object label, right=2pt] at (0.12,0.02) {$y$};

    \draw[map arrow, draw=SLCESingular!82!black]
        (0.12,0.02) -- (0.12,0.75)
        node[above=1pt, object label] {$\partial_{3}u^{-}$};
    \draw[map arrow, draw=SLCESingular!82!black]
        (0.12,0.02) -- (0.12,-0.75)
        node[below=1pt, object label] {$\partial_{3}u^{+}$};

    \node[object label] at (0.82,0.54) {$\Omega$};
    \node[object label] at (-0.49,-0.10) {$\Gamma$};
    \node[panel label] at (0,-1.55)
        {\textup{(c)} $y=(w_{1},w_{2},w_{3})$};
\end{scope}

\end{tikzpicture}
\caption{The collapse underlying the three-dimensional example.
The map $H$ sends the Bellman core $K$ to the convex body
$H(K)$.  For each $z'=(z_{1},z_{2})\in\Int{}E$,
the map $T=\nabla w\circ H^{-1}$ collapses the vertical fibre of
$H(K)$ of length $L(z')$ to one point of $\Gamma$.  After
Legendre transformation, the two endpoints of that fibre become the
one-sided gradients of $u$, and
$\partial_{3}u^{-}-\partial_{3}u^{+}=L(z')$.  The drawing is
schematic and not to scale.}
\label{fig:bellman-gradient-collapse}
\end{figure}

\begin{proof}[Proof of Theorem~\ref{thm:three-dimensional-Lipschitz}]
Let $\Gamma = \Gamma_0$ and choose a smooth bounded domain $\Omega$ such that $\Gamma \Subset \Omega \Subset T(D)$. For $y\in\Omega\setminus\Gamma$, there is a unique $x\in H^{-1}(D)\setminus K$ with $\nabla w(x)=y$. We define the Legendre transform
\[
u(y):=x\cdot y-w(x).
\]
Since $x\cdot y - w(x)$ is constant along any collapsed fibre in $K$, $u$ extends uniquely and continuously to $\Gamma$.
Away from $\Gamma$, $D^{2}w$ has inertia $(+,+,-)$, so Lemma~\ref{lem:Legendre-transform-identity} implies $\tr(\arctan S[u](y)) = \pi/2 - \mathcal{F}(x, D^{2}w(x)) = \Theta$. Thus $u$ is a classical SLCE solution in $\Omega\setminus\Gamma$.

For $z' = (z_1, z_2)$ in the projection of $H(K)$, let $L(z')$ denote the length of the vertical fibre in $H(K)$ above $z'$. As $T_3$ strictly decreases outside $H(K)$, the inverse points from the upper and lower sides of $\Gamma$ approach the endpoints of this fibre. Consequently, the vertical gradient component jumps by
\begin{equation}
\label{eq:5-gradient-jump}
\partial_{3}u^{+}-\partial_{3}u^{-}
=-L(z')<0
\end{equation}
on the relative interior of $\Gamma$. Thus $u$ is Lipschitz but not $C^{1}$.

It remains to verify the viscosity properties on $\Gamma$.
Because $u$ is concave along vertical lines crossing $\Gamma$ (a direct consequence of $T_3$ being strictly decreasing and the orientation of the gradient jump), no $C^2$ function can touch $u$ from below at any point on $\Gamma$. The supersolution condition is therefore vacuous.

To show $u$ is a subsolution, we perturb the dual potential by setting $w_{k}(x) := w(x) - x_{3}^{2}/k$.
By the strict ellipticity of $\mathcal{F}^{ij}$ evaluated on the rank-one segment, $\mathcal{F}(x, D^{2}w_{k}) < \mathcal{F}(x, D^{2}w) \le \Theta^{*}$.
For large $k$, $D^{2}w_{k}$ maintains the inertia $(+,+,-)$, and its gradient map $\nabla w_{k}$ is globally injective on $H^{-1}(D)$ because the corresponding vertical derivative in $H$-coordinates becomes $(\det D^2 w / \det DH) - 2/k < 0$.
The single-valued Legendre transform $u_{k} = w_{k}^{*}$ is of class $C^{2}$ and satisfies
\[
F_{\Theta}(Du_{k},D^{2}u_{k})
=
\Theta^{*}-\mathcal{F}(x,D^{2}w_{k})
>0.
\]
As $w_k \to w$ uniformly, the gradients of $u_k$ remain bounded in the fixed set $H^{-1}(D)$, yielding $u_{k} \to u$ locally uniformly in $\Omega$. The stability of viscosity subsolutions concludes the proof.
\end{proof}

\bibliographystyle{amsplain}
\providecommand{\bysame}{\leavevmode\hbox to3em{\hrulefill}\thinspace}
\providecommand{\MR}{\relax\ifhmode\unskip\space\fi MR }
\providecommand{\MRhref}[2]{%
  \href{http://www.ams.org/mathscinet-getitem?mr=#1}{#2}
}
\providecommand{\href}[2]{#2}

\end{document}